\def\th@newremark{\th@remark\thm@headfont{\bfseries}}  
\theoremstyle{newremark}                               
\newcommand{\Bin}{\mbox{Bin}}
\newcommand{\card}{\mbox{Card}}
\def\N{\mathbb{N}}
\def\ind{ \mathbbm{1} }
\DeclareMathOperator{\length}{length}
\newtheorem{theorem}{Theorem}[section]
\newtheorem{proposition}[theorem]{Proposition}
\newcommand{\VEC}[1]{\mathbf{#1}}
\definecolor{burgundy}{rgb}{0.5, 0.0, 0.13}
\definecolor{linkblue}{RGB}{0,20,128}
\definecolor{linkred}{RGB}{128, 0, 6}
\definecolor{citegreen}{RGB}{46, 126, 42}
\definecolor{urlmagenta}{RGB}{138, 0, 135}
\begin{document}

\title{Crossing bridges between percolation models and Bienaym\'e-Galton-Watson trees}
	\author{  Airam Blancas\thanks{Department of Statistics, ITAM, Mexico.
		Email: {airam.blancas@itam.mx}},  Mar\'ia Clara Fittipaldi\thanks{Facultad de Ciencias, UNAM, Mexico. Email: {mcfittipaldi@ciencias.unam.mx} }, 
 Sara\'i Hern\'andez-Torres\thanks{Instituto de Matem\'aticas, UNAM, Mexico. Email: {saraiht@im.unam.mx} }}
 \date{}
	\maketitle
	\vspace{-.2in}
\maketitle

%\vspace{-.2in}
\begin{abstract}
In this survey, we explore the connections between two areas of probability: percolation theory and population genetic models. Our first goal is to highlight a construction on Bienaym\'e-Galton-Watson trees, which has been described in two different ways: as Bernoulli bond percolation and as neutral mutations. Next, we introduce a novel connection between the Divide-and-Color percolation model and a particular multi-type Bienaym\'e-Galton-Watson tree. We provide a gentle introduction to these topics while presenting an overview of the results that connect them.

\bigskip

\noindent	\textbf{Keywords:} Bienaymé-Galton-Watson trees; Bernoulli percolation; multi-allelic neutral mutations.\\
\textbf{MSC2020 subject classifications:} 
60J80% Branching processes
; 92D25% Population dynamics
; 60K35% Interacting random processes; statistical mechanics type models; percolation theory.
; 82B43% Percolation 
.
\end{abstract}

\section{Introduction} \label{sec:intro}

Percolation and branching processes are two flourishing areas within probability theory that have received increased attention in recent years. Both fields present rich mathematical problems that have fostered dynamic research communities.
This survey explores connections between these areas. We begin with a brief introduction to the models involved.

A percolation model is associated with dispersion in porous media, where the physical phenomenon of percolation can occur.
This phenomenon refers to the passage of a fluid from the surface to the center of a material.
One of the simplest yet most intriguing percolation models is Bernoulli bond percolation, defined by Broadbent and Hammersley in the 1950s~\cite{BroadbentHammersley}.

In Bernoulli bond percolation, a graph $G$ represents the underlying space. 
A random subgraph of $G$ is generated  by randomly deleting edges of $G$ according to a retention parameter $p \in [0,1]$.  
The remaining edges are considered open, so that $p$ modulates the density of open edges (a precise construction is given in Subsection~\ref{sec:BerPer}).
Referring to the initial description of percolation phenomena, a liquid can advance through an open edge.

Roughly speaking, the percolation event occurs when there is an open connected component of size comparable to the underlying graph $G$. We refer to an open connected component as a \emph{cluster}. When $G$ is an infinite graph, the percolation event corresponds to the existence of an infinite open cluster. 
In particular, we may look at the open cluster connected to the origin $0$. We denote this cluster by $\mathcal{C}_0$, which corresponds to the subset of vertices connected to $0$ by open edges. In the terms described above, if a liquid advances through open edges, $\mathcal{C}_0$ is the set of the vertices reachable from the origin.

Bernoulli percolation exhibits a phase transition between a subcritical phase and a supercritical phase, depending on the density of the open edges.
In the subcritical phase, the density is low enough that the percolation event will not occur with probability one. However, as the density increases, there is a sudden shift, and the percolation event occurs with probability one. In particular, for an infinite graph $G$ and in the supercritical case, the cluster $\mathcal{C}_0$ has a positive probability of being infinite.

Since the introduction of the model, some of the most important questions for the percolation research community have been closely related to conjectures about the system's behavior at the critical parameter, often originating from physics.
For instance, questions about universality and conformal invariance at criticality were initially proposed in the physics literature. We discuss these questions and the achievements in this area in Subsection~\ref{subsec:universalitycritical}.

This survey, however, takes a different direction. Its motivation is to explore and interpret percolation models in ways that relate to branching processes and population dynamics.

The study of branching processes began in 1845 with Bienaymé~\cite{Bienayme} and was advanced in 1873 by Galton and Watson~\cite{GaltonWatson}.
We refer to~\cite{Kendall75} for an account of the origins of this model.
Branching processes model the random behavior of a population's genealogy as successive generations reproduce over time. The main model in this field is the Bienaymé-Galton-Watson process, valued for its simplicity, wide-ranging applications, and flexibility in incorporating variations to account for more realistic scenarios. For example, to represent populations with multiple types of individuals, one can employ the multi-type Bienaymé-Galton-Watson process (detailed in Subsection~\ref{sec:GWfintyp}).

Another approach to incorporating evolutionary dynamics into a branching process is through the Bienaymé-Galton-Watson process with neutral mutations (discussed in Subsection~\ref{sec:GWmut}). Bertoin introduced and analyzed this model in~\cite{Bertoin10, Bertoin09}. In this model, mutations change the genotype of individuals without affecting their reproductive laws, which follow a standard Bienaymé-Galton-Watson process.
Since mutations occur in ancestral lineages, offspring may not inherit the genetic type of their mothers (referred to as an \emph{allele} in biological terms).
Additionally, we assume the population has an infinite number of alleles, meaning each mutation event produces a different allele for a mutant---an individual with a new allele. This dynamic is known as neutral evolution, where reproductive fitness remains unaffected by evolutionary changes, as proposed in Kimura's model of infinite alleles~\cite{Kimura1971}.

The assumption of infinite alleles is prevalent in mutation models, though it may not always be the most suitable assumption. In particular, some works in the biological literature emphasize the importance of working with models that consider a finite number of alleles~\cite{PMID:29030470, MSRM13, SiFit17}.

Finite allele models are particularly suitable for DNA evolution due to the inherent chemistry of DNA, which allows for a finite number of combinations of the nitrogenous bases. Some examples of this were given by Jukes and Cantor~\cite{JukesCantor1969} and Kimura~\cite{Kimura1980, Kimura1981}, among others. A well-known model with these assumptions is the \emph{parent-independent mutation} (PIM) model, an extension of these models in which the type of a mutant does not depend on the type of its mother.
In Subsection~\ref{subsec: BGWfinitemutation}, we introduce a version of the PIM model along a Bienaymé-Galton-Watson tree.

Initially, it may appear that percolation theory and population genetics are unrelated. However, previous work has revealed deep relations through branching processes. These connections go both ways: they unveil large-scale properties of percolation on graphs and properties of mutations along genealogies.

The work of Bertoin is seminal in the applications of Bernoulli percolation to population genetics. In~\cite{Bertoin09}, he considers a subcritical Bienaymé-Galton-Watson process with neutral mutations (infinite alleles model) and decomposes the entire population into clusters of individuals carrying the same allele. The author studied the law of this allelic partition in terms of the distribution of the number of clone-children and the number of mutant-children of a typical individual. In a follow-up work~\cite{Bertoin10}, Bertoin focused on the situation where the initial population is large and the mutation rate small. He proves that, for an appropriate regime, the process of the sizes of the allelic subfamilies converges in distribution to a certain continuous-state branching process in discrete time. 
This last work was extended in two directions by Blancas and Rivero~\cite{BlancasRivero}. In the critical case, they constructed a version of Bertoin's model conditioned not to go extinct. Additionally, they established a version of the limit theorems presented in~\cite{Bertoin10} when the reproduction law has infinite variance and is in the domain of attraction of an $\alpha$-stable distribution, both for the unconditioned process and for the process conditioned to non-extinction.

On the other hand, interpreting Bernoulli percolation on a random tree as neutral mutations along a genealogical tree has proven to be a successful strategy. For instance, Berzunza~\cite{Gabriel2020} studied Bernoulli percolation on trees associated with the genealogy of Crump-Mode-Jagers processes, which model populations where individuals may give birth at different points in time during their lives. The main result in~\cite{Gabriel2020} is the existence of a giant percolation cluster for large Crump-Mode-Jagers trees. In~\cite{BertoinUB}, Bertoin and Uribe Bravo considered supercritical Bernoulli bond percolation on large scale-free trees, closely related to Yule processes. Using the neutral mutations interpretation, the authors in~\cite{BertoinUB} obtained a weak limit theorem for the sizes of the clusters.

In this survey, we focus on the connections between percolation theory and the two extensions of Bienaymé-Galton-Watson (BGW) processes discussed above: the multi-type Bienaymé-Galton-Watson process and the Bienaymé-Galton-Watson process with neutral mutations.

Our starting point is the key observation in~\cite{Bertoin09}: a Bienaymé-Galton-Watson process with neutral mutations and an \emph{infinite number of alleles} is equivalent to Bernoulli percolation on a Bienaymé-Galton-Watson tree. This representation serves as the primary bridge between the two topics covered in this survey, and we aim to explain it carefully.

As noted earlier, in some cases, a finite number of alleles provides a more realistic model. Establishing the corresponding connection between a Bienaymé-Galton-Watson process with neutral mutations and a \emph{finite number of alleles} requires a variation of Bernoulli percolation known as Divide-and-Color percolation, introduced by Häggström in~\cite{Haggstrom}.

Divide-and-Color percolation is a simple and natural model for coloring the vertices of a graph, with its construction detailed in Subsection~\ref{sec:DaC}. In this survey, we propose a novel interpretation of Divide-and-Color percolation as a model for neutral mutations along a genealogy when the number of alleles is finite.

The remainder of this survey is organized as follows.
In Section~\ref{sec:trees}, we provide essential definitions of trees, treating them both as graphs and as genealogical structures.
Section~\ref{sec:branching} introduces the Bienaymé-Galton-Watson (BGW) process and its extensions, including the multi-type BGW process, the BGW process with neutral mutations and infinite alleles, and the BGW process with neutral mutations and finite alleles. For additional background  on branching processes, we recommend the classical reference by Athreya and Ney~\cite{AthreyaNey} and the more recent notes by Lambert~\cite{AmauryNotas2008}.
Section~\ref{sec:percolation} focuses on Bernoulli percolation on finite and infinite graphs, along with the Divide-and-Color percolation model. The standard reference for Bernoulli percolation is Grimmett~\cite{GrimmettPercolation}, while percolation theory on trees is thoroughly discussed in~\cite{lyons2017probability}.
Finally, in Section~\ref{sec:connections}, we discuss the connections between the branching processes and percolation models introduced in the preceding sections.

\section{Trees}  \label{sec:trees}

In this section, we look at trees as mathematical objects from two perspectives. From a combinatorial viewpoint, we first define a tree as a graph, following the standard notation presented in Lyons and Peres~\cite{lyons2017probability}.  Afterwards, we consider a tree as a genealogical structure. For further details on the latter perspective, we refer to Abraham and Delmas~\cite{AbrahamDelmas15} and Duquesne and Le Gall~\cite{DuquesneLeGall}.

\subsection{Trees as graphs} \label{sec:graphs}

A \emph{graph} is a pair $G = (V, E)$, where $V$ is the set of \emph{vertices} and $E$ is a subset of $V \times V$ called the \emph{edge set}. We consider the edges of $E$ to be \emph{unoriented}, and accordingly, we use the notation $\{x, y\}$ for an element of $E$.
If $\{x,y\}\in E$, then we call $x$ and $y$ \emph{adjacent} or \emph{neighbors}, and we write $x \sim y$. The \emph{degree} of a vertex is the number of its neighbors. Whenever we have more than one graph under consideration, we shall distinguish the vertex and edge sets of $G$ by writing $V(G)$ and $E(G)$. A graph $G$ is \emph{finite} if the cardinality $\vert V \vert < \infty $; otherwise, we say that $G$ is an infinite graph. An infinite graph is \emph{locally finite} if every vertex has finite degree. From now on, we assume that all infinite graphs are locally finite as well.

A \emph{subgraph} $G'$ of $G$ is a graph whose vertex set is a subset of $V(G)$ and whose edge set is a subset of $E(G)$. If $G'$ satisfies that its vertex subset is $V$, we refer to 
$G'$ as a \emph{spanning subgraph}.

A graph \emph{isomorphism} between $G = (V,E)$ and $G'= (V',E')$  is a bijection  $\varphi : V \rightarrow V'$ for which $\varphi(x) \sim \varphi(v)$ if and only if $ x \sim v$. 
An isomorphism between a graph $G$ and itself is called an \emph{automorphism}. 
A graph is \emph{transitive} if for any $u, v \in V$ there exists an automorphism $\varphi_{u,v}$ such that $\varphi_{u,v} (u) = v$.

A \emph{path} in a graph is a sequence of vertices $ [v_0, v_1, \ldots ] $  
such that each successive pair of vertices forms an edge in the graph.
A finite path is said to \emph{connect} its first and last vertices.  
When a path does not pass through any vertex (resp., edge) more than once, we call it \emph{simple}. 
The \emph{length} of a finite path $ [v_0, v_1, \ldots, v_n] $ is the number of edges connecting each pair of vertices in the sequence, so we write $ \length([v_0, v_1, \ldots, v_n]) = n $.
We say that such a path is between $v_0$ and $v_n$.

A finite path with at least one edge, and whose first and last vertices are the same, is called a \emph{cycle}. 
A cycle is \emph{simple} if no pair of vertices is the same except for its first and last ones. A graph is \emph{connected} if, for each pair $x\neq y$ of its vertices, there is a path connecting $x$ to $y$. 

Let $G = (V,E)$ be a finite, or locally finite, connected graph. The \emph{graph metric}  $d_G : V \times V \rightarrow [0, \infty)$ is	defined by 
\begin{equation} \label{eq:graphMetric}
	d_G (x,y) \coloneqq \inf \{  \length({\mathbf{p}}) \: : \: \mathbf{p} \text{ is a simple path between } x \text{ and } y \}. 
\end{equation}
The closed ball of radius $r > 0$ with center at $x$ is the set of vertices
\[
	B_G (x, r) \coloneqq \{  y \in V \: : \: \ d_G (x,y) \leq r  \}.
\]
If $G$ is a transitive graph, any two closed balls of radius $r > 0$ are isomorphic as graphs.

A graph with no cycles is called a \emph{forest}, and a connected forest is a \emph{tree}.
We will work with \emph{rooted trees}, meaning that some vertex is designated as the root, denoted by $\emptyset$. 
Once we set a root, we can label the tree using a partial order for its vertices. We are interested in a labeling that conveys genealogical data, which we introduce in the next subsection.

\subsection{Trees as genealogical structures}

In this note, we will work in the following setting. We imagine that the tree is growing away from its root, and its growth specifies a population genealogy: the root has edges leading to its children, then these vertices branch out to their own children, and we advance in this way throughout the generations.
The ancestry of an individual in the tree determines its label in the Ulam-Harris notation, which we introduce below.

Let us define the set of finite sequences of positive integers as
\[ 
	\mathcal{U}=\bigcup\limits_{n\geq 0}\mathbb{N}^n,
\]
 following the convention $\mathbbm{N}^0 = \{\emptyset\}$. 
 For $n\geq 1$ and $v=(v_1 \dots v_n) \in \mathcal{U}$, we denote by $\Vert v\Vert=n$ the \emph{length} of $v$, where the root satisfies $\Vert \emptyset \Vert=0$. Let $u,v \in \mathcal{U}$. We say that $v $  is an \emph{ancestor of} the vertex $u$, and write $v \prec u $, if there exists $w \in \mathcal{U}$, $w\neq \emptyset$, such that we can write $u=vw$, the concatenation of the two sequences. Observe that  $u\preceq u$ for any $u\in \mathcal{U}$. This ancestry relation defines a lexicographic order on $\mathcal{U}$, which we will use to set the index of each vertex on a given tree, starting from the root. 
 Within this framework, a \emph{genealogical tree}  $\mathbf{t}$ is a subset of $\mathcal{U}$ satisfying the following conditions:
\begin{enumerate}
	\item  $\emptyset \in \mathbf{t}$. 
	\item  If $u \in \mathbf{t}$, then $A_u \coloneqq \{v \in \mathcal{U}: v\prec u\} \subset \mathbf{t}$.
	\item  For every $u \in \mathbf{t}$, there exists $k_u(\mathbf{t}) \in \mathbb{N}_0 \cup \{+\infty\}$ such that for every $i \in \mathbb{N}$, $ui \in \mathbf{t}$ if and only if $1 \leq i\leq k_u(\mathbf{t})$.
\end{enumerate} 
The set $A_u$ corresponds to the \emph{set of ancestors} of $u$, and the integer $k_u(\textbf{t})$ indicates the number of offspring of the vertex $u \in \textbf{t}$. See Figure \ref{fig1: UH BGW } for an example.

For any $u \in \textbf{t}$, we define the subtree $\textbf{t}_u$ of $\textbf{t}$ with root $u$ as:
\[
 \mathbf{t}_{u} \coloneqq \{v \in \mathcal{U}: uv \in \textbf{t}\}.
\]
Then, the tree $\mathbf{t}_{u}$ corresponds to the genealogical tree of the individual $u$ and its descendants.

We denote by $\mathbbm{T}$ the set of trees without vertices of infinite degree (also known as locally finite trees):
\[
\mathbbm{T} = \{\textbf{t} : k_u (\textbf{t}) < +\infty \quad \forall u \in  \textbf{t}\}.
\] 
For $\textbf{t} \in \mathbbm{T}$, we denote by $\vert \textbf{t} \vert = \card \,\textbf{t}$ the cardinality of its vertex set. Let $\mathbbm{T}_0$ be the set of finite trees:
\[\mathbbm{T}_0 = \{\textbf{t} \in \mathbbm{T} : \vert \textbf{t}\vert < +\infty \}.
\]

\section{Genealogical structures for branching processes} \label{sec:branching}

\subsection{Bienaym\'e-Galton-Watson process}\label{sec:GW}

Consider a population evolving in discrete time, where each time step denotes a generation. At each given time, every individual dies and gives birth to a random number $\xi$ of children in the next generation, according to an \textit{offspring distribution} $\mu = (\mu(k), k \in \mathbb{N}_0)$, so that $\mathbb{P}(\xi = k) = \mu(k)$.

For each $j \in \mathbb{N}$ and $n\in \mathbb{N}_0$, we write $\xi_{n+1, j}$  for the number of children alive in the $(n+1)$-th generation, of the $j$-th individual in generation $n$.
The random variables $((\xi_{n+1, j}; j \in \mathbb{N}), n \in \mathbb{N})$ are independent and identically distributed  with common distribution $\mu$. In order to avoid trivial cases, we assume throughout that $\mu(0) + \mu(1)<1$ and $\mu(k)\neq 1$ for any $k$. A Bienaym\'e-Galton-Watson (BGW)  process with offspring distribution $\mu$ is a discrete-time Markov chain $(X_n, n \in \mathbb{N}_0)$  defined as $X_0 = 1$ and
\[
X_{n+1}= \sum\limits_{j=1}^{X_{n}}\xi_{n+1,j}, \qquad \forall n \in \mathbb{N}_0,
\]
so that  $X_n$  is the size of the population at generation $n$. 
The law of a BGW process starting from $x$ individuals (i.e $X_0 = x$) is denoted by $\mathbb{P}_x$.

Let $(X_n(x), n \in \mathbb{N}_0)$ be a BGW process starting from $x$ individuals. One of the key properties of a BGW process is the \emph{branching property}. It establishes that 
\[
X(x+y)\overset{d}{=}\bar{X}(x) +\widetilde{X}(y),
\]
where $\bar{X}$ and $\widetilde{X}$ are independent copies of $X$ starting from $x$ and $y$ individuals, respectively. In other words, for each $x,y \in \mathbb{N}_0$, the probability measure $\mathbb{P}_{x+y}$ is equal to the convolution of  $\mathbb{P}_{x}$ and  $\mathbb{P}_{y}$ .

A \emph{BGW tree}, denoted as $\mathcal{T}$,  is the genealogical tree resulting from the BGW process originating from a single individual. Therefore, $\mathcal{T}$ is a $\mathbb{T}$-valued random variable, with a root denoted by $\emptyset$ to signify the initial individual, and edges establishing connections between parents and their children. For each vertex $v$ within $\mathcal{T}$, $k_{v} (\mathcal{T})$ denotes the number of offspring it has, following a distribution $\mu$.

We follow the Ulam-Harris notation to label the genealogical tree $\mathcal{T}$. According to this notation, an individual indexed by $u=(u_1\dots u_{n} u_{n+1})$ in the $(n+1)$-th generation is identified as the $u_{n+1}$-th child of the individual $(u_1\dots u_{n})$ in the $n$-th generation. This labeling allows us to trace backward in time along the genealogy until reaching the $u_1$-th individual in the first generation, who is a child of the root denoted by $\emptyset$.

As a consequence of the definition of the BGW process, the subtree $\mathcal{T}_{u}$, rooted at $u\in \mathcal{T}$, is a BGW tree with the same distribution as $\mathcal{T}$. It is worth noting that $\mathcal{T}_{u}$ represents the genealogical tree associated with a BGW process $(\widetilde{X}_n, n\geq 0)$ starting from the individual $u$.

By the branching property,  conditioned on the event $\{k_{\emptyset}(\mathcal{T})=n\}$, the subtrees $(\mathcal{T}^{(1)},\dots,\mathcal{T}^{(n)} )$ are independent and follow the distribution of the original tree $\mathcal{T}$.

\begin{figure}[h]
	\begin{center}
		\begin{tikzpicture}
			[
			level 1/.style={sibling distance=40mm},
			level 2/.style={sibling distance=30mm},
			level 3/.style={sibling distance=12mm},
			level 4/.style={sibling distance=10mm},
			every node/.style={circle, draw=black,thin, minimum size = 1 cm}]
			\node{ $\emptyset$}
			child {node {1}
				child {node {11}
					child {node {111}}
					child {node {112}}
					child {node {113}}}
				child {node {12}
					child {node {121}}}}
			child {node {2}
				child {node {21}
					child {node {211}}
					child {node {212}}}};
		\end{tikzpicture}
		\caption{
		This figure shows an example of a BGW tree with Ulam-Harris labeling. In this instance, the offspring distribution is defined as  $\mu(k)=\tfrac{1}{3}\ind_{ \{1,2,3\} }(k)$.}
		\label{fig1: UH BGW }
\end{center}
\end{figure}
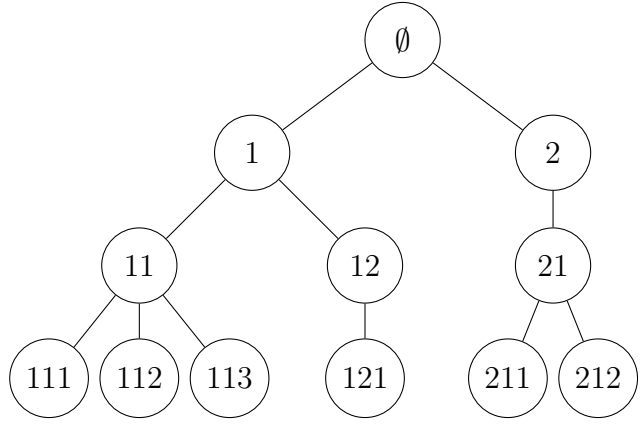

\subsection{Multi-type BGW processes }\label{sec:GWfintyp}

 We now aim to describe a branching population with $d$ types of individuals; these types are indexed by the set $[d]\coloneqq \{1, 2\dots, d\}$. For each $i \in [d]$, individuals of type $i$ will produce offspring of possibly every type, independently of each other. To avoid trivial cases, we assume that the event where all children of an individual inherit its type occurs with probability less than one.
Formally, let $\boldsymbol{\xi}^{(i)}=(\xi^{(i,j)})_{j \in [d]}$ denote the random vector where the  $j$-th entry is the number of children that an individual of type $i$ gives birth. Its distribution is given by $\mu_i$, i.e.,
\begin{equation}\label{eq:muloffdist}
	\mu_i( {\VEC{v}} ) = \mathbb{P}( \xi^{(i,1)}= v_1,  \xi^{(i,2)} = v_2 , \dots, \xi^{(i,d)} = v_d ), \qquad \VEC{v} = (v_1, \ldots, v_d) \in \N_0^d.
\end{equation}

We define $\boldsymbol{\xi}^{(i)}_{n+1,k}=(\xi^{(i,1)}_{n+1,k},\xi^{(i,2)}_{n+1,k}, \dots, \xi^{(i,d)}_{n+1,k} )$ as the number of children in the $(n+1)$-th generation born to the $k$-th individual of type $i$ in generation $n$.
Let us assume that, for each $i \in [d]$, $( (\boldsymbol{\xi}^{(i)}_{n+1,k} , k\in \mathbb{N}), n \in \mathbb{N}_0 )$ are independent random vectors with common distribution $\mu_i$. In other words, $\mu_i( {\VEC{v}} )$  is the probability that such individual gives birth $\vert\VEC{v}\vert=\sum_{j=1}^d v_j$ children, of which $v_j$ are of type $j$. 
 
Let $Y_n^{(i)}$ denote the number of individuals of type $i$ in the $n$-th generation. Without loss of generality, we assume that the population starts with a single individual of type $i$.
Thus, $(\VEC{Y}_n = (Y_n^{(1)}, \dots, Y_n^{(d)}), n \in \mathbb{N}_0)$, with law $\mathbb{P}$, is a discrete-time Markov chain, recursively defined as ${\VEC{Y}}_0 = \VEC{e}_i$, where $\VEC{e}_i$ denotes the canonical vector of $\mathbb{N}_0^d$, and
\begin{equation}\label{eq:multipopsize}
Y_{n+1}^{(j)}  =  \sum_{i=1}^{d} \sum_{k=1}^{Y_n^{(i)}} \xi^{(i,j)}_{n+1,k} , \qquad  i\in [d], \quad n\in \N_0. 
\end{equation}

The process $(\VEC{Y}_n, n \in \mathbb{N}_0)$ is known as a \emph{multi-type BGW} process with offspring distribution $\boldsymbol{\mu}=( \mu_1, \dots, \mu_d)$. It is clear that when $d=1$ the process coincides with the BGW process.

As in the one-dimensional case, the multi-type BGW process $\VEC{Y}$ satisfies the \emph{branching property}, i.e.,
\[
	\VEC{Y}(\VEC{u}+\VEC{v})\overset{d}{=}\bar{\VEC{Y}}(\VEC{u}) +\widetilde{\VEC{Y}}(\VEC{v}),
\]
where $\bar{\VEC{Y}}$ and $\widetilde{\VEC{Y}}$ are independent copies of $\VEC{Y}$, starting from $\VEC{u} = (u_1, \dots, u_d)$ and $\VEC{v} = (v_1, \dots, v_d)$ individuals, respectively.
As before, this implies that the probability measure $\mathbb{P}_{\VEC{u}+\VEC{v}}$ is equal to the convolution of  $\mathbb{P}_{\VEC{u}}$ and  $\mathbb{P}_{\VEC{v}}$ for any $\VEC{u},\VEC{v} \in \mathbb{N}_0^d$.

A \emph{multi-type BGW tree} $\boldsymbol{\mathcal{T}}^i$ is the genealogical tree associated with the multi-type BGW process $(\VEC{Y}_n , n\in \N)$, starting from an individual of type $i$.
To include the individual type information, we label $\boldsymbol{\mathcal{T}}^i$ according to a  generalization of the Ulam-Harris notation. We identify each vertex with a pair $(i,u)$,  where the first coordinate  $i\in [d]$ indicates the type and $u\in \mathbb{U}$, as before.  See Figure~\ref{fig:2} for an example, where the type is indicated by the vertex color.
Note that the multi-type BGW tree satisfies the branching property in a manner similar to the BGW tree with one type.

\begin{figure}[h]
	\centering
		\begin{tikzpicture} 
			[
			level 1/.style={sibling distance=65mm},
			level 2/.style={sibling distance=21mm},
			level 3/.style={sibling distance=11mm},
%			level 4/.style={sibling distance=12mm},
			every node/.style={circle, draw=cyan!50,line width=1.5pt, minimum size = 1 cm},
			bluen/.style={circle, draw=blue!50,line width=2pt, minimum size = 1 cm},
			greenn/.style={circle, draw=green!50,line width=2pt, minimum size = 1 cm},
			emph/.style={edge from parent/.style={draw,line width=1.5pt,-,black!50}},
			rede/.style={edge from parent/.style={draw,line width=1.5pt,-,red!50}},
			bluee/.style={edge from parent/.style={draw,line width=5pt,-,blue!50}},
			greene/.style={edge from parent/.style={draw,line width=5pt,-,green!50}},
			]
			\node {$\emptyset$}
			child{node {$1$}
				child {node {$11$}
					child {node {$111$}}
					child {node[bluen] {112}}
					child {node[greenn] {113}}}
				child {node {12}}
				child {node[bluen] {13}
					child {node {131}}
					child {node[bluen] {132}}
					child {node[bluen] {133}}}}
			child {node[bluen] {2}
				child {node[bluen] {21}
					child {node[greenn] {212}}}
				child {node {22}}
				child {node[greenn] {23}
					child {node[bluen] {231}}
					child {node[greenn] {232}}
					child {node {233}}}};
		\end{tikzpicture}
		\caption{ 
		An example of a genealogical tree for our multi-type process is shown. In this case, we label the tree using the usual Ulam-Harris notation, but we include a node color that represents the \emph{type} of each individual, using blue for type 1, purple for type 2, and green for type 3.} \label{fig:2}
\end{figure}
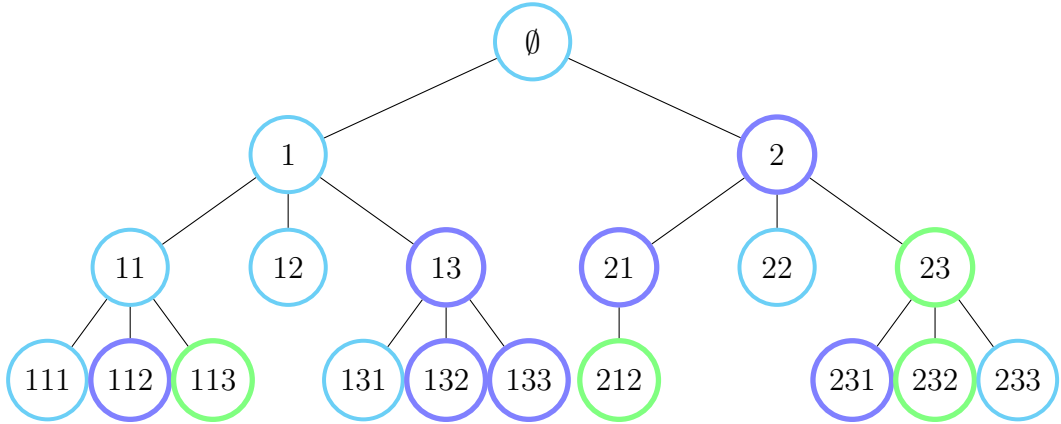

\subsection{BGW processes with neutral mutations and infinite alleles}\label{sec:GWmut}

As a further extension of the BGW model, Bertoin~\cite{Bertoin09, Bertoin10} studied the Galton-Watson process with neutral mutations, which we will call the \emph{BGW process with infinite neutral mutations}. This model assumes that mutations modify the genotype of individuals but do not affect the population dynamics, which evolve according to the BGW process described in Section~\ref{sec:GW}. Since mutations appear in the ancestral lines of the population, each individual may have children that do not necessarily inherit its genetic type. From a biological perspective, a genetic type corresponds to an allele. Moreover, we assume that the population has an infinite number of alleles, meaning that each mutation event gives rise to a different type. 

Following the terminology of \cite{Bertoin10}, the children of individuals that share the same type as their mother are called \emph{clones}, while those of a different type are called \emph{mutants}.
Additionally, $\xi^{(c)}$ and $\xi^{(m)}$ are non-negative integer-valued random variables, not necessarily independent, which describe the number of clones and mutants, respectively, of a given individual. These are distributed according to $\mu^{(c)}$ and $\mu^{(m)}$, respectively. Hence, any individual (whether a mutant or not) has a random number of children, $\xi^{(+)} \coloneqq \xi^{(c)} + \xi^{(m)}$, independently of all other individuals, with distribution  $\mu$. 
In particular, we are interested in the case where mutations affect each child with a fixed probability
$r$, independently of its siblings. 
More precisely, the conditional distribution of $\xi^{(m)}$ given $\{\xi^{(+) }=v_1+v_2 \}$ is binomial with parameters $(v_1+v_2, r)$, i.e.
\begin{equation}\label{eq:infmutlaw}
	\mathbb{P}( \xi^{(c)}=v_1, \xi^{(m)}=v_2)={ \binom{v_1+v_2 }{v_1}}(1-r)^{v_1}r^{v_2}.
\end{equation}

It is possible to view the BGW process with infinite neutral mutations as a multi-type BGW process with countably many types. To be precise, consider a population with types indexed by  $\mathbb{N}_0$, so that whenever individuals of type $i$ reproduce, they give birth to individuals of type $i$ (clones) or type $i+j$ , for some $j \in \mathbb{N}$, where type  $i+j$  has never been seen in the population (mutants). Let $[d_n]$ be the set of types observed up to generation $n$. Hence, if $Y_n^{(i)}$ denotes the number of type $i$ individuals in the $n$-th generation,  we have the following:
\[
	\begin{cases}
		Y_{n+1}^{(i)} =   \sum_{h=1}^{Y_n^{(i)}} \xi^{(c)}_{n+1,h} &\quad \text{for any } i\in[d_n], \\
		\\
		Y_{n+1}^{(d_n+i(h))} =  \xi^{(m)}_{n+1,h} &\quad h=1,\dots, \sum_{i=0}^{d_n} Y_{n}^{(i)}  \text{ if  }  \xi^{(m)}_{n+1,h} \neq 0,
	\end{cases}
\]
where $\{(\xi^{(c)}_{n+1,h}, h \in \mathbb{N}), n \in \mathbb{N}\}$ are independent copies of $\xi^{(c)}$, and $i(h)= \sum_{k=1}^h \ind_{ \{\xi^{(m)}_{n+1,k} \neq 0 \} }$. 
See Figure~\ref{fig:infiniteAllele} for an example of this process.

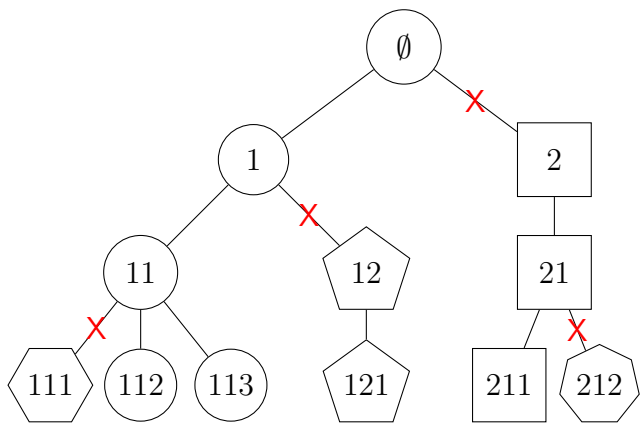
\begin{figure}[ht] 
\centering
	\begin{tikzpicture}
	[
	level 1/.style={sibling distance=40mm},
	level 2/.style={sibling distance=30mm},
	level 3/.style={sibling distance=12mm},
	level 4/.style={sibling distance=10mm},]
	%every node/.style={circle, draw=black,thin, minimum size = 1 cm}]
	\node[circle,draw,inner sep=6]{ $\emptyset$}
	child {node[circle,draw,inner sep=6] {1}
		child {node[circle,draw,inner sep=5] {11}
			child {node[regular polygon,
				regular polygon sides=6,draw,inner sep=1] {111}edge from parent node {\textcolor{red}{{\sf X}}}}
			child {node[circle,draw,inner sep=3] {112}}
			child {node[circle,draw,inner sep=3] {113}}}
		child {node[regular polygon,
			regular polygon sides=5,draw,inner sep=4] {12}
			child {node[regular polygon,
				regular polygon sides=5,draw,inner sep=1] {121}}edge from parent node {\textcolor{red}{{\sf X}}}}}
	child {node[regular polygon,
		regular polygon sides=4,draw,inner sep=6] {2}
		child {node[regular polygon,
			regular polygon sides=4,draw,inner sep=4] {21}
			child {node[regular polygon,
				regular polygon sides=4,draw,inner sep=1] {211}}
			child {node[regular polygon,
				regular polygon sides=7,draw,inner sep=1] {212}edge from parent node {\textcolor{red}{{\sf X}}}}}edge from parent node {\textcolor{red}{{\sf X}}}};
\end{tikzpicture}
	\caption{ 
	An example of a BGW tree with infinite allele-type mutations. This tree corresponds to the one in Figure~\ref{fig1: UH BGW } with  with $r=0.5$.  Note that after each new mutation, a new type of individual appears (a type that has never been seen before).}
	\label{fig3: inf alleles} \label{fig:infiniteAllele}
\end{figure}

\subsection{BGW processes with neutral mutations and finite alleles} \label{subsec: BGWfinitemutation}

We are also interested in studying BGW processes with neutral mutations in the case of finitely many alleles. In this framework, we propose two models of multi-allelic neutral mutation for a BGW process. First, we introduce the mother-dependent mutation model, where a mutant cannot have the same type as its mother. In the second model, we drop this assumption and introduce the mother-independent mutation model.

\subsubsection{Mother-dependent mutation model} \label{subsub: MDM}

In the same framework as~\cite{Bertoin10},  we consider neutral mutations in a BGW process with probability $r$, assuming that the population has finitely many alleles.
Formally, we can view the \emph{mother-dependent mutation model} (MDM) as a multi-type BGW process, as introduced in Section~\ref{sec:GWfintyp}. In particular, we have the  discrete-time Markov chain $(\VEC{Y}_t, t\geq 0)$ as in \eqref{eq:multipopsize}, where for $i \in [d]$, the offspring distribution $\mu_i$ defined in~\eqref{eq:muloffdist} is multinomial. That is, for $ \VEC{v} = (v_1, \ldots, v_d) \in \mathbb{N}_0^d $,
\begin{equation}\label{eq: mothdepproba}
	\begin{split}
		\mu_i(\VEC{v})&= \mu(|\VEC{v}|) \binom{|\VEC{v}|}{v_1,\dots,v_d} (1-r)^{v_i} \prod\limits_{j \neq i}\left(\dfrac{r}{d-1}\right)^{v_j}\\
		&= 
		\mu(|\VEC{v}|) 
		\binom{|\VEC{v}|}{v_i} (1-r)^{v_i} r^{|\VEC{v}|-v_i} 
		\binom{|\VEC{v}| - v_i}{v_1,\dots,v_{i-1},v_{i+1},\dots,v_d}\left(\dfrac{1}{d-1}\right)^{|\VEC{v}|-v_i},
	\end{split}
\end{equation}
where $r \in [0,1]$ and $|\VEC{v}|=\sum\limits_{j=1}^d v_j$. Here, $\mu$ is the probability measure on $\N_0$ associated with 
\begin{equation} \label{eq:offspringdist1}
	\xi^{(+)}\overset{d}{=}\xi^{(+,i)} =\sum_{j=1}^{d} \xi^{(i,j)},
\end{equation} 
the total number of children of any individual, independently of its type.

Hence, the evolution described above corresponds to the following mechanism: at each generation, every individual reproduces with probability 
 $\left(\mu(k), k \in \N_0\right)$, independently of others. 
Each child will be a copy of its mother with probability $1-r$, independently of their siblings.
If it is not, its new type will be chosen uniformly from among the  $d-1$ remaining types. 
For an example of a realization of this process, see Figure~\ref{fig:fig4}.

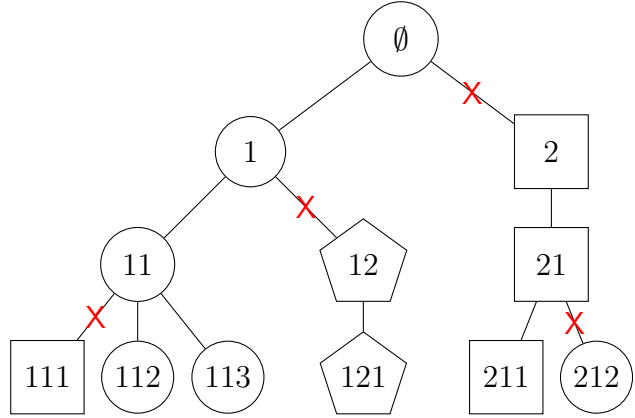
\begin{figure}[ht] \label{fig4: finite alleles}
	\centering
	\begin{tikzpicture}
	[
	level 1/.style={sibling distance=40mm},
	level 2/.style={sibling distance=30mm},
	level 3/.style={sibling distance=12mm},
	level 4/.style={sibling distance=10mm},]
	%every node/.style={circle, draw=black,thin, minimum size = 1 cm}]
	\node[circle,draw,inner sep=6]{ $\emptyset$}
	child {node[circle,draw,inner sep=6] {1}
		child {node[circle,draw,inner sep=5] {11}
			child {node[regular polygon,
				regular polygon sides=4,draw,inner sep=1] {111}edge from parent node {\textcolor{red}{{\sf X}}}}
			child {node[circle,draw,inner sep=3] {112}}
			child {node[circle,draw,inner sep=3] {113}}}
		child {node[regular polygon,
			regular polygon sides=5,draw,inner sep=4] {12}
			child {node[regular polygon,
				regular polygon sides=5,draw,inner sep=1] {121}}edge from parent node {\textcolor{red}{{\sf X}}}}}
	child {node[regular polygon,
		regular polygon sides=4,draw,inner sep=6] {2}
		child {node[regular polygon,
			regular polygon sides=4,draw,inner sep=4] {21}
			child {node[regular polygon,
				regular polygon sides=4,draw,inner sep=1] {211}}
			child {node[circle,draw,inner sep=3] {212}edge from parent node {\textcolor{red}{{\sf X}}}}}edge from parent node {\textcolor{red}{{\sf X}}}};
\end{tikzpicture}
	\caption{An example of a BGW tree with mother-dependent finite allele mutations. This tree corresponds to the tree in Figure~\ref{fig1: UH BGW } with $r=0.5$ and three types represented as circle, square, and pentagon. Note that in this case, each mutation event produces an individual whose type is chosen uniformly among all available types, excluding its mother's type.} \label{fig:fig4}
\end{figure}

%%%%%%%%%%%%%%%%%%%%%%%%%%%%%%%%%%%%%%%%%%%%%%%%%%%%%%%%%%%%%%%%%%%%%%%%%%%%%%%%%%%%%%%%%%%%%%%%%%%%%%%%%%%%%%%%%%%%%%%%%%%%%%%%%%%%%%%%%%%%%%%%%%%%%%%%%%%%%%%%%%%%%%%%%%%%%%%%%%%%%%%%%%%%%%%%%%%%%%%%%%%%%%%%%%

\subsubsection{Mother-independent mutation model} \label{subsub: MIM} \label{sec:mother-independent}

In the \emph{mother-independent mutation model}  (MIM), mutant children are allowed to have the same type as their mother. Similar to the mother-dependent mutation model (MDM), the population evolves as a discrete-time Markov chain $(\VEC{Z}_t, t\geq 0)$, as described in  \eqref{eq:multipopsize}. For $i \in [d]$, the offspring distribution $\mu_i$ defined in \eqref{eq:muloffdist} is multinomial and given by:
\begin{equation}\label{eq:mothindproba}
		\mu_i(\VEC{v})=
            \mu(|\VEC{v}|)
            \sum_{k = 1}^{v_i}
           \binom{|\VEC{v}|}{k} (1-r)^{k}
           r^{|\VEC{v}|-k}  %probability of choosing k clones and v-k mutants
           \binom{|\VEC{v}|-k}{v_1, \dots , v_i - k , \dots, v_d}
           \left(\dfrac{1}{d}\right)^{ |\VEC{v}| - k },
\end{equation}
for each $ \vert \VEC{v} \vert = (v_1, \ldots, v_d) \in \mathbb{N}_0^d $.
Here, the first term corresponds to the offspring distribution in~\eqref{eq:offspringdist1}, the index $k$ represents the number of clones of type $i$, and the last two factors represent the coloring mechanism, where $v_i - k$  mutant children randomly choose their types, including the possibility of matching their mother’s type. Figure~\ref{fig: mthind mutation} illustrates an example of this model.

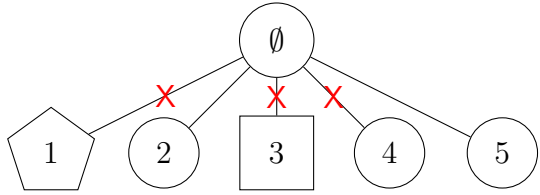
\begin{figure}[ht] \label{fig5: finite alleles}
	\centering
	\begin{tikzpicture}
	[
	level 1/.style={sibling distance=15mm},
	level 2/.style={sibling distance=30mm}
        ]
	%every node/.style={circle, draw=black,thin, minimum size = 1 cm}]
	\node[circle,draw,inner sep=6]{$\emptyset$}
	child {node[regular polygon,
		regular polygon sides=5,draw,inner sep=6] {1}edge from parent node {\textcolor{red}{{\sf X}}}}
	child {node[circle,draw,inner sep=6] {2}}
	child {node[regular polygon,
		regular polygon sides=4,draw,inner sep=6] {3}edge from parent node {\textcolor{red}{{\sf X}}}}
	child {node[circle,draw,inner sep=6] {4}edge from parent node {\textcolor{red}{{\sf X}}}}
	child {node[circle,draw,inner sep=6] {5}};
\end{tikzpicture}
	\caption{ An illustration of a branching event for the mother-independent mutation (MIM) model in the case where we have three types (circle, square and pentagon). For this model, each mutant child chooses its type uniformly from the set of all possible types.}

\label{fig: mthind mutation}
\end{figure}

\subsection{Phase Transition for branching processes}

\subsubsection{Extinction for BGW processes} \label{subsection:extinctionBGW}

Given a BGW  process $(X_n, n\in \mathbb{N}_0)$, \emph{extinction} occurs  if $X_n=0$ for some generation $n$. Note that if $X_n=0$, then $X_{n+k}=0$ for every $k \in \mathbb{N}$. We denote this event by $\{ \text{Ext} \}$, and thus,
\[
	\{ \text{Ext} \} = \{\text{there exists } n \in \mathbb{N} : X_n = 0\} = \bigcup\limits_{n \in \mathbb{N}}\{X_n = 0\}= \lim\limits_{n \rightarrow +\infty} \{X_n = 0\},
\]
as $\left(\{X_n = 0\}\right)_{n\in \mathbb{N}}$  are increasing events. We point out that the extinction event in the BGW process corresponds to the event that $\mathcal{T}$ is a finite tree, i.e.,  $ \{ \text{Ext} \} = \{\mathcal{T} \in \mathbb{T}_0\}$.

The generating function of $\xi$ is given by 
\[
	f(s)= \sum\limits_{k\in \mathbb{N}_0}s^k\mu(k), \qquad s \in [0,1],
\] 
and its mean is $\mathbb{E}(\xi)=m$. It is well-known that the fixed-point equation $f(s) = s$ has at most two solutions in $[0,1]$, and the extinction probability $q\coloneqq\mathbb{P}(\text{Ext})$ is the smallest root of this equation. In fact, BGW processes display a phase transition, from almost surely extinction ($q=1$) if $m<1$, to positive probability of survival ($q<1$) if $m>1$. We say that a BGW process $(X_n, n \in \mathbb{N}_0)$ (resp. a BGW tree $\mathcal{T}$) is  \emph{subcritical}, \emph{critical} or \emph{supercritical} depending on  $m<1$,  $m=1$ or  $m>1$, respectively. 

The extinction conditions for a multi-type BGW are written in terms of the matrix recording the expected number of type $j$ offspring of a single type $i$ individual in one generation. The precise condition is given in~\cite[Theorem 2, Chapter V]{AthreyaNey}.
Analogous to the one dimensional case, the extinction conditions for a multi-type BGW are given in terms of a parameter $\rho$, which is the maximum eigenvalue of the mean matrix $\VEC{M}=(m_{ij})_{i,j \in [d]}$, given by $m_{ij}=\mathbb{E}(\xi^{(i,j)})$.
Given $\VEC{M}$, a strictly positive matrix, if $\rho \leq 1$, then  $\mathbb{P}(Y^{(i)}_{t} = 0 \text{ for some } t)=1$ for all $i \in [d]$. If $\rho > 1$, then  $\mathbb{P}(Y^{(i)}_{t} =0 \text{ for some } t)<1$ for all $i \in [d]$. The precise results (for the discrete-time case) can be found in~\cite[Chapter V.3]{AthreyaNey}.

\subsubsection{Phase transition for BGW process with neutral mutation}

For the BGW process with neutral mutations, the process of the total number of individuals per generation depends on the law of the underlying branching process, regardless of whether the model has finitely many or infinite alleles.  In fact, the underlying BGW tree $\mathcal{T}$ will be finite almost surely if $m  \leq 1$, where 
\begin{equation}\label{eq: defmformutation}
		m=\mathbb{E}\left[\xi^{+}\right]
\end{equation}
and $\xi^+$ is given by~\eqref{eq:offspringdist1}.  If $m >1$, $\mathcal{T}$ has a positive probability of being infinite. In this case, the relevant question is whether there exists an infinite subtree of individuals of the same type, and whether such a subtree includes the root.  We will address these matters later in Subsection \ref{subsec: phasetran mutation}.

\section{A glance at Bernoulli percolation} \label{sec:percolation}

In this section, we will present a quick overview of Bernoulli percolation.

Let us recall our graph notations introduced in Section~\ref{sec:graphs}, and let $G = (V,E)$ be a finite graph. In the sequel, it will be helpful to consider a spanning subgraph $G'$ of  $G$ as an element of $\mathcal{S} \coloneqq \{ 0, 1 \}^E = \{ s : E \rightarrow \{ 0,1 \} \} $. Observe that each $s \in \mathcal{S}$ is uniquely identified by its subset of edges $ E_{s} \coloneqq \{ e \in E \: : \:   s(e) = 1 \} $. We will refer to the edges in $E_{s}$ as \emph{open} in the configuration defined by $s$. Hence $s $ is in one-to-one correspondence with the subgraph $ (V, E_{s})$. 
With this correspondence in mind, it is convenient to refer to subgraphs as elements of $\mathcal{S}$. We will describe a random spanning subgraph as a probability distribution over the measurable space $(\mathcal{S}, \mathcal{P} (\mathcal{S}))$, where $\mathcal{P} (\mathcal{S})$ is the power set of $\mathcal{S}$.

\subsection{Bernoulli percolation}\label{sec:BerPer}

\emph{Bernoulli bond percolation} on $G = (V, E)$ with  parameter  $p \in [0,1]$ is defined by the probability measure $\mathbf{P}^{G}_p$ on the measurable space $(\mathcal{S}, \mathcal{P} (\mathcal{S}))$ given by
\begin{equation} \label{eq:perco-measure}
	\mathbf{P}^{G}_{p} (s) =  \prod_{ e \in E} p^{s (e)} (1 - p)^{1 - s (e)}, \qquad s \in \mathcal{S}.
\end{equation}
For simplicity, we will often refer to the Bernoulli bond percolation as \emph{Bernoulli percolation}.

By the correspondence established above between $\mathcal{S}$ and spanning subgraphs of $G$, we can define a random spanning subgraph $\mathcal{O}$ so that its edge set is $E_s$ with probability $\mathbf{P}^G_{p} (s)$. 
In words, the subgraph $\mathcal{O}$ is given by the following rule for each edge $e \in E$:
\begin{align} \label{eq:PercolationRule}
	&\text{the edge } e \text{ is open with probability } p \text{ and} \\
	&\text{the edge } e \text{ is closed  with probability } 1-p,  \nonumber
\end{align}
independently of other edges. We thus have that 
$E_s = \{ e \in E \: : \: e  \text{ is open} \} $. For each edge $e \in E$, the marginal  measure of  $e$ in $\mathbf{P}^G_{p}$ corresponds to a Bernoulli distribution with parameter $p$. 

The rule in~\eqref{eq:PercolationRule} can also be applied to the set of vertices $V$ 
(instead of the set of edges), which defines \emph{Bernoulli site percolation}. In this survey, we mainly focus on Bernoulli bond percolation, but Bernoulli site percolation will appear occasionally, and we will refer to it by its full name.

From the perspective of percolation theory, we are interested in the geometry of the random spanning subgraph $\mathcal{O}$. In a discrete setting, questions about the geometry of a subgraph first concern connectivity properties, including the number of connected components and their size (i.e., their number of vertices). Recall that a subgraph $G' = (V', E')$ is \emph{connected} if for any two different $x, y \in V'$, there exists a path of vertices in $G'$ between $x$ and $y$. Given a percolation configuration $\mathcal{O}$, a (random) connected subgraph $\mathcal{C}$ of open edges is called a \emph{cluster}. See Figure \ref{fig6}.

\tikzstyle{vertex}=[circle,draw,line width=1pt, color=black, fill=white,minimum size=20pt,inner sep=0pt]
\tikzstyle{selected vertex green} = [vertex, draw,color=green,line width=2pt,fill=white, text=black]
\tikzstyle{selected vertex blue} = [vertex, draw,line width=2pt, color=blue,fill=white, text=black]
\tikzstyle{selected vertex cyan} = [vertex, draw,line width=2pt,color=cyan,fill=white, text=black]
\tikzstyle{edge} = [draw,thick,-]
\tikzstyle{edgeb} = [draw,-]
\tikzstyle{weight} = [font=\small]
\tikzstyle{selected edge} = [draw,line width=5pt,-,red!50]
\tikzstyle{selected edge blue} = [draw,line width=5pt,-,blue!50]
\tikzstyle{selected edge green} = [draw,line width=5pt,-,green!50]
\tikzstyle{selected edge cyan} = [draw,line width=5pt,-,cyan!50]
\pgfdeclarelayer{background}
\pgfsetlayers{background,main}

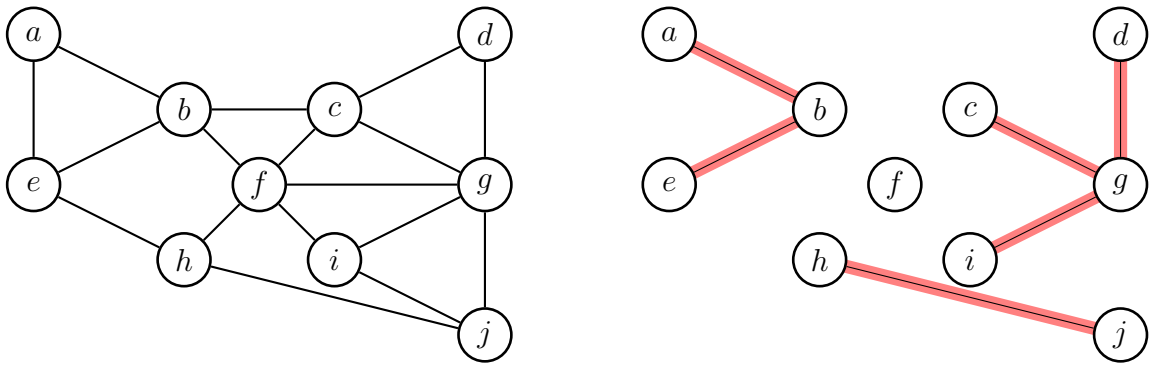
\begin{figure}[h]
	\centering
	\begin{subfigure}{.5\textwidth}
		\centering
		\begin{tikzpicture}[scale=1, auto,swap]
			% Draw a 10-vertex network
			% First we draw the vertices
			\foreach \pos/\name in {{(0,2)/a}, {(2,1)/b}, {(4,1)/c}, {(6,2)/d},
				{(0,0)/e}, {(3,0)/f}, {(6,0)/g},
				{(2,-1)/h}, {(4,-1)/i}, {(6,-2)/j}}
			\node[vertex] (\name) at \pos {$\name$};
			% Connect vertices with edges and draw weights
			\foreach \source/ \dest /\weight in {b/a/, c/b/, d/c/,
				e/a/, e/b/, f/b/, f/c/, 
				g/c/, g/d/, g/f/,
				h/e/, h/f/,
				i/f/, i/g/,
				j/g/, j/h/, j/i/}
			\path[edge] (\source) -- node[weight] {$\weight$} (\dest);
		\end{tikzpicture}
	\end{subfigure}
	\begin{subfigure}{.45\textwidth}
		\centering
		\begin{tikzpicture}[scale=1, auto,swap]
			% Draw a 10-vertex network
			% First we draw the vertices
			\foreach \pos/\name in {{(0,2)/a}, {(2,1)/b}, {(4,1)/c}, {(6,2)/d},
				{(0,0)/e}, {(3,0)/f}, {(6,0)/g},
				{(2,-1)/h}, {(4,-1)/i}, {(6,-2)/j}}
			\node[vertex] (\name) at \pos {$\name$};
			% Connect vertices with edges and draw weights
			\foreach \source/ \dest /\weight in {b/a/,g/d/,j/h/,b/e/,g/i/,g/c/}
			\path[edgeb] (\source) -- node[weight] {$\weight$} (\dest);
			\begin{pgfonlayer}{background}
				\foreach \source / \dest in {b/a/,g/d/,j/h,b/e/,g/i/,g/c/}
				\path[selected edge] (\source.center) -- (\dest.center);
			\end{pgfonlayer}
		\end{tikzpicture}
	\end{subfigure}
\caption{On the left-hand side, we have a finite connected graph $G$.  The figure on the right-hand side illustrates a realization of Bernoulli percolation with parameter $p$. The red edges correspond to the set of open edges $E_s$. The clusters of $E_s$ are $\{ a,b,e\}$, $\{ c,d,g,i\}$, $\{ f\}$ and $\{ h,j\}$. The closed edges of $G$ have been deleted.}
\label{fig6}
\end{figure}

\subsubsection{Phase transition for Bernoulli percolation on finite graphs}

A remarkable property of Bernoulli percolation on a graph $G$ is that it undergoes a phase transition in the size of its largest cluster as the parameter  $p$ varies. 
A cluster is called \emph{giant} 
if it contains a constant positive proportion of the vertices in $G$.
The \emph{supercritical} 
phase of Bernoulli percolation corresponds to the existence of a giant cluster with probability greater than a given constant. We provide quantitative definitions below. 
However, we remark that there are several ways to define the percolation phase transition for finite graphs, and these definitions are not necessarily equivalent.

Let $G$ be a finite, connected, transitive graph.
Fix $r \in (0,1)$ and $ \varepsilon > 0 $.
A cluster $K$ of a percolation configuration $\mathcal{O}$ on $G$ is $r$-giant if 
\begin{equation} \label{eq:densityR}
	 \vert K \vert \geq r \vert V \vert ,
\end{equation}
where, in a slight abuse of notation, $\vert \cdot \vert$ denotes the  cardinality of the vertex set of $K$. Hence,~\eqref{eq:densityR} indicates that the \emph{density} of the cluster $K$ in the graph $G$ is at least $r$.

Let $ (K_n)_{n \geq 1}$ be the clusters of the percolation configuration $ \mathcal{O} $ ranked in decreasing size order, so that  $K_1$ is the largest cluster.
Following the definition of Easo and Hutchcroft in~\cite{easo2024supercritical}, for each $\varepsilon > 0$, the percolation parameter $p$ is $\varepsilon$-supercritical for $G$ if  $\vert V \vert \geq 2 \varepsilon^{-3} $ (meaning that the graph is large enough) and
\[
	\mathbf{P}^{G}_{(1 - \varepsilon)p } \left(  \vert K_1 \vert  \geq  \varepsilon \vert V \vert \right) \geq \varepsilon.
\]

For an asymptotic definition of the percolation phase transition on finite graphs, we follow Bollobás, Borgs, Chayes, and Riordan~\cite{bollobas2010}. Consider a sequence of finite graphs $(G_n)_{n \geq 1} = ((V_n, E_n))_{n \geq 1}$   such that $ \vert V_n \vert \rightarrow \infty $. The sequence $ (G_n)_{n \geq 1} $ has a percolation phase transition if there exists a sequence of parameters $(p_n)_{n \geq 1}$ such that for every $\varepsilon > 0$
\begin{align*}
	\lim_{n \to \infty} \mathbf{P}_{(1 + \varepsilon)p_n}^{G_n} \left(  \vert K_1 \vert \geq \alpha \vert V \vert \right) &= 1 \qquad \text{ for some } \alpha > 0, \text{ and } \\
	\lim_{n \to \infty} \mathbf{P}_{(1 - \varepsilon)p_n}^{G_n} \left(  \vert K_1 \vert \geq \alpha \vert V \vert \right) &= 0 \qquad \text{ for all } \alpha > 0.
\end{align*}
This means that, in the supercritical regime, there is asymptotically almost surely a giant component.
The existence of a critical percolation parameter is not immediate. A characterization for the existence of a percolation phase transition for sequences of finite transitive graphs is given in~\cite{easo2023}.

\subsubsection{Bernoulli percolation on infinite graphs}

The theory of phase transitions in Bernoulli percolation was originally formulated and developed for infinite graphs. Here, we briefly review this setting and refer to~\cite{GrimmettPercolation} for a detailed account of the subject.

Let $G = (V,E)$ be an infinite, connected, and transitive graph. 
Recall that, in this work, all infinite graphs are locally finite as well. 
Consider the space of subgraphs $\mathcal{S} = \{ 0,1\}^E$. 
In this space, a \emph{cylinder event} $B$ is defined by a finite set $F = \{ e_1, \ldots , e_j \} \subset E$ and values $ a_1, \ldots , a_j \in \{ 0,1 \} $ such that 
\[ 
	B = \{ (s (e) )_{e \in E} \in \mathcal{S} \: : \: (s (e_1), \ldots , s (e_j)) = (a_1, \ldots , a_j) \}.
\]
We endow $\mathcal{S}$ with the product $\sigma$-algebra $\mathcal{F}$ generated by cylinder events. 
The Bernoulli bond percolation measure with parameter $p \in [0,1]$ on $G$  is defined by 
\begin{equation} \label{eq:perco-measure-infty}
	\mathbf{P}^{G}_{p} (B) =  \prod_{ i = 1}^{j} p^{a_i} (1 - p)^{1 - a_i}, 
\end{equation}
for any cylinder event $B$ as above.

For a transitive, connected, infinite graph $G$, we define the percolation phase transition by the existence of an infinite open cluster. 
The \emph{percolation density} $\theta : [0,1] \rightarrow [0,1]$  is the function defined by the probability that $\mathcal{C}_{\rho}$, the cluster containing a fixed vertex $\rho$, is infinite:
\[
	\theta (p) = \mathbf{P}^G_p \left( \vert \mathcal{C}_{\rho} \vert = \infty \right), \qquad p \in [0,1].
\]
The \emph{percolation critical parameter} on $G$ is defined by
\[
	p_c^{G} = \sup \{ p \in [0,1] \: : \: \theta (p) = 0 \}. 
\]

\subsubsection{Critical phenomena, universality, and locality} \label{subsec:universalitycritical}

In this subsection, we overview two fundamental concepts driving research on Bernoulli percolation: \emph{universality} and \emph{locality}. This part of the survey is independent of the rest of the work and provides perspective on the research landscape in this area.

Universality refers to the shared behavior of various physical systems at and near critical points.
When two systems share this behavior, we say that they belong to the same \emph{universality class}.
One instance where this  behavior appears is in the decay of certain quantities associated with the system, where systems in the same universality class exhibit common exponents for the power-law decay of these quantities as they approach a critical point. For example, in the case of Bernoulli bond percolation on a transitive, connected, infinite graph $G$ with critical parameter $p_c$, it is expected that
\begin{equation} \label{eq:percDensity}
		\theta (p) = (p - p_c)^{\beta + o(1)}, \qquad \text{ as } p \downarrow p_c.
\end{equation}
The exponent $\beta$, common to all systems in the same universality class, is referred to as a \emph{universal critical exponent}. It is believed that the critical exponent  $\beta$ for Bernoulli percolation depends only on the spatial dimension of the underlying graph.

On infinite transitive graphs, the concept of dimension is defined through the \emph{volume growth rate}, determined by the asymptotic behavior of the volume of closed balls in the graph metric (defined above in~\eqref{eq:graphMetric}) as their radius $r \to \infty$. For instance, in the lattice graph $\mathbb{Z}^d$, there exist constants $c, C > 0$ such that 
\[
 c r^d \leq \vert B_{\mathbb{Z}^d} (0, r) \vert \leq  C r^d, \qquad \text{ for all } r \geq 1.
\]
Thus, $\mathbb{Z}^d $ has a polynomial growth rate, with volume growth dimension $d$. 

In the 1970s,  \emph{critical phenomena} were illuminated  in physics through Wilson’s work on the renormalization group, for which he received the 1982 Nobel Prize in Physics~\cite{Wilson}. One consequence of this formalism is the conformal invariance, in the scaling limit, of critical two-dimensional statistical models, such as Bernoulli percolation~\cite{BPZa,BPZb}.

A deeper understanding of conformally invariant processes in the complex plane emerged with Schramm's introduction of the Schramm-Loewner evolution (SLE), a family of stochastic processes on the plane indexed by a parameter $\kappa > 0$~\cite{Schramm00}. Using this tool, Lawler, Werner, and Schramm made substantial progress in understanding the properties of SLE and applied it as a tool to study critical phenomena and the fractal geometry of two-dimensional Brownian motion~\cite{LSW2001a,LSW2001b,LSW2002c,LSW2002b,LSW2002a,LSW2004}. For these results, Werner received the Fields Medal in 2006~\cite{WernerFields}.

A rigorous proof of the conformal invariance of the scaling limit of two-dimensional critical Bernoulli percolation was achieved by Smirnov, who proved the conjectured conformal invariance for Bernoulli site percolation on the triangular lattice~\cite{Smirnov}. This result showed that SLE  with parameter $\kappa = 6$ is the scaling limit of two-dimensional critical Bernoulli site percolation on the triangular lattice. This landmark work also led to Smirnov's receipt of the Fields Medal in 2010~\cite{SmirnovFields}. A summary of the results by Lawler, Werner, Schramm, and Smirnov, along with Kesten's previous work, appears in~\cite{SmirnovWerner}. In this work, Smirnov and Werner establish the existence and values of universal critical exponents for Bernoulli percolation on the triangular lattice, confirming that the value of the exponent in~\eqref{eq:percDensity} is  $\beta = 5/36$, as predicted in the physics literature.

Percolation theory has extended the techniques of Bernoulli percolation to  a larger class of percolation models, allowing for dependencies. One significant model in this class is the \emph{random-cluster model} (also called FK percolation), introduced by Fortuin and Kasteleyn in the 1970s~\cite{FortuinKasteleyn,FortuinII,FortuinIII}. It generalizes Bernoulli percolation and provides a unifying framework for models such as the Ising and Potts models, as well as Bernoulli percolation itself. Taking limits on the parameters, the random-cluster model includes the uniform spanning tree and the $\beta$-arboreal gas models~\cite{JacobsenSalasSokal}. For a comprehensive treatment, see~\cite{Grimmett}.

Duminil-Copin and collaborators have made significant progress in understanding the phase transitions and the critical and near-critical behavior of the random-cluster model~\cite{AizenmanDuminilCopinSidoravicius,BeffaraDuminilCopin,DuminilCopinGagnebinHarelManolescuTassion,DuminilCopinKozlowskiKrachunManolescuOulamara,DuminilCopinRaoufiTassion,DuminilSidoraviciusTassion}. This work has also led to new insights into Bernoulli percolation, simplifying the proofs of key results, such as a proof of the sharpness of the phase transition~\cite{DuminilCopinTassion2016new}. Duminil-Copin’s work earned him a Fields Medal in 2022~\cite{HugoFields}.

In Bernoulli percolation, several questions remain open. One of the biggest challenges is to understand the phase transition and critical behavior in three dimensions, such as on $\mathbb{Z}^3$.
Another rich area of research explores Bernoulli percolation on graphs beyond $\mathbb{Z}^d$, where this model has generated deep connections with other mathematical fields, including geometric group theory.

Studying Bernoulli percolation on diverse graphs has highlighted different properties of the model. Since the early days of percolation theory, numerical analysis has shown that the value of $p^G_c$ is not universal; rather, it depends on the graph $G$. In fact, whenever $p^G_c < 1$, the value of $p^G_c$ depends on the graph's \emph{local} geometry. 
This is a remarkable phenomenon, since locality is the opposite of universality.

The local geometry of a graph is measured by the \emph{local topology} (also known as the \emph{Benjamini-Schramm topology}). 
A sequence of connected, infinite, transitive graphs $(G_n)_{n \geq 1}$ converges to a  connected, infinite, transitive graph $G$ in the local topology if the following holds: for each $r > 0$, the closed balls of radius $r$ in $G_n$ are isomorphic to those in $G$ for all sufficiently large $n$.

In the space of connected, infinite, transitive graphs, let $(G_n)_{n \geq 1}$ be a sequence converging to a graph $G$ in the local topology as $n \to \infty$. If $\sup_{n \geq 1} p_c(G_n) < 1$, Schramm conjectured that $ p_c (G_n) \rightarrow p_c (G)$ as $n \to \infty$.

A key step toward understanding this conjecture was achieved in~\cite{duminil2020existence}, where the authors showed that the condition $p_c (G_n) = 1$ is equivalent to $G_n$ being one-dimensional, meaning that its volume growth satisfies $\vert B_{G_n} (v, r) \vert = O(r)$ as $r \to \infty$.

Based on~\cite{duminil2020existence}, for Schramm's locality conjecture, it is sufficient to assume that each element in the sequence $(G_n)_{n \geq 1}$ has superlinear volume growth. 

Schramm's locality conjecture was recently settled in a breakthrough work by Hutchcroft and Easo, who proved that, given a sequence of connected, infinite, transitive graphs $(G_n)_{n \geq 1}$ with superlinear volume growth, if $G_n \rightarrow G$ as $n \to \infty$ in the local topology, then $p_c (G_n) \rightarrow p_c (G)$ as $n \to \infty$~\cite{EasoHutchcroft23}.

\subsection{Bernoulli percolation on trees} \label{subsec: BpBGW}

We now turn to a specific class of graphs and begin with a brief account of Bernoulli percolation on infinite trees.
We begin by looking at Bernoulli percolation on $d$-ary trees. For $d \geq 2$, a $d$-ary 
tree is an infinite rooted tree in which each vertex has $d$ children.

\begin{proposition}  \label{prop:percolation-dtree}
		Let $T_d$ be a $d$-ary tree with $d \geq 2$ and consider Bernoulli percolation with parameter $p \in [0,1]$ on $T_d$. 
		If $\mathcal{T}_{\emptyset}$ is the open cluster of the root, then $\mathcal{T}_{\emptyset}$ follows the distribution of a BGW tree with progeny distribution $\Bin(d, p)$. 
\end{proposition}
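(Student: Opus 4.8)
The plan is to exploit the self-similarity of $T_d$ together with the branching-property characterization of BGW trees recalled above. First I would fix independent edge variables $(\omega_e)_{e \in E(T_d)}$, each Bernoulli$(p)$, declaring $e$ open iff $\omega_e = 1$, and recall that since $T_d$ is a tree a vertex $v$ belongs to $\mathcal{T}_{\emptyset}$ precisely when every edge on the unique simple path from $\emptyset$ to $v$ is open. In particular the children of the root inside the cluster are exactly those children $i \in [d]$ of $\emptyset$ in $T_d$ for which the edge $\{\emptyset, i\}$ is open, so $k_{\emptyset}(\mathcal{T}_{\emptyset}) = \sum_{i=1}^{d} \omega_{\{\emptyset,i\}}$, which is $\Bin(d,p)$-distributed.

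Next I would establish the branching property. For each $i \in [d]$, the descendants of $i$ in $T_d$ span a subgraph $T_d^{(i)}$ that is itself isomorphic to $T_d$, and the edge sets $E(T_d^{(i)})$ are pairwise disjoint and disjoint from the $d$ edges incident to $\emptyset$. Conditioning on the set of open root-edges, the portion of $\mathcal{T}_{\emptyset}$ lying in $T_d^{(i)}$ (for $i$ with $\{\emptyset,i\}$ open) is the open cluster of $i$ in $T_d^{(i)}$, and these depend on disjoint families of edge variables; hence they are independent, and each has the same law as $\mathcal{T}_{\emptyset}$ by the isomorphism $T_d^{(i)} \cong T_d$. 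After relabelling the open children of $\emptyset$ in increasing order as $1, \dots, k$, this says exactly that, conditionally on $\{k_{\emptyset}(\mathcal{T}_{\emptyset}) = k\}$, the subtrees $\mathcal{T}_{\emptyset}^{(1)}, \dots, \mathcal{T}_{\emptyset}^{(k)}$ are i.i.d.\ copies of $\mathcal{T}_{\emptyset}$. Together with the root computation, the branching-property characterization of BGW trees then identifies $\mathcal{T}_{\emptyset}$ as a BGW tree with offspring law $\Bin(d,p)$.

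As a concrete verification on the finite part, I would compute $\mathbf{P}^{T_d}_{p}(\mathcal{T}_{\emptyset} = \mathbf{t})$ directly for $\mathbf{t} \in \mathbbm{T}_0$ with $k_u(\mathbf{t}) \leq d$ for all $u \in \mathbf{t}$: realizing $\mathbf{t}$ forces, at each $u \in \mathbf{t}$, exactly $k_u(\mathbf{t})$ of the $d$ child-edges to be open and the remaining $d - k_u(\mathbf{t})$ closed, and there are $\binom{d}{k_u(\mathbf{t})}$ choices of which child positions of $T_d$ carry the open edges once the cluster is recorded through Ulam-Harris labels. This yields
\[
\mathbf{P}^{T_d}_{p}(\mathcal{T}_{\emptyset} = \mathbf{t}) = \prod_{u \in \mathbf{t}} \binom{d}{k_u(\mathbf{t})} p^{\,k_u(\mathbf{t})} (1-p)^{\,d - k_u(\mathbf{t})} = \prod_{u \in \mathbf{t}} \mu\bigl(k_u(\mathbf{t})\bigr),
\]
with $\mu = \Bin(d,p)$, which is precisely the law of the corresponding BGW tree restricted to finite trees; applying the same bookkeeping to the generation-$n$ truncations extends the identity to all of $\mathbbm{T}$.

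I expect the main obstacle to be organizational rather than analytic: matching the $[d]$-indexed child positions of $T_d$ with the Ulam-Harris $\mathbb{N}$-labels of a genealogical tree, which is where the combinatorial factor $\binom{d}{k_u(\mathbf{t})}$ enters and combines with $p^{k}(1-p)^{d-k}$ to produce the binomial weights. Some care is also needed to argue that the finite-tree (equivalently, truncated-tree) identities determine the full law on $\mathbbm{T}$, since $\mathcal{T}_{\emptyset}$ may be infinite with positive probability when $dp > 1$.
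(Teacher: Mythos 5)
Your proposal is correct, and in fact it supplies more than the paper does: the survey states Proposition~\ref{prop:percolation-dtree} without proof, treating it as a standard fact. The closest the paper comes to an argument is in Subsection~\ref{subsec: relatbtwmodels}, where the correspondence between Bernoulli percolation and the infinite-alleles mutation model is justified by exactly the reduction in your first two paragraphs: by the branching property it suffices to analyze the root and its children, and the number of open child-edges is binomial (there written as $\Bin(v,1-r)$ conditionally on $v$ children). Your recursive argument --- root offspring law $\Bin(d,p)$, plus conditional independence of the subclusters because they live on disjoint edge sets of isomorphic copies of $T_d$ --- is the right formalization, and it determines the law on all of $\mathbbm{T}$ through generation-$n$ truncations. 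The explicit product formula $\prod_{u \in \mathbf{t}} \binom{d}{k_u(\mathbf{t})} p^{k_u(\mathbf{t})}(1-p)^{d-k_u(\mathbf{t})}$ in your third paragraph is a sound independent check, and your closing caveat is exactly the point that such a check alone would miss: when $dp>1$ the cluster is infinite with positive probability, so agreement of the two laws on $\mathbbm{T}_0$ does not suffice and the truncation step is genuinely needed.
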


A direct consequence of Proposition~\ref{prop:percolation-dtree} and the phase transition on the survival of a BGW tree (as explained in Subsection~\ref{subsection:extinctionBGW}) is the following.

\begin{proposition} \label{prop:critical-dtree}
	Let $T_d$ be a $d$-ary tree with $d \geq 2$. Then the critical percolation parameter is  $p_c (T_d) = 1/d $. 
\end{proposition}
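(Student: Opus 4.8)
The plan is to combine Proposition~\ref{prop:percolation-dtree} with the extinction criterion for BGW trees recalled in Subsection~\ref{subsection:extinctionBGW}. The key observation is that the critical percolation parameter $p_c(T_d)$ is precisely the threshold separating the regime where the open cluster of the root is finite almost surely from the regime where it survives with positive probability. By Proposition~\ref{prop:percolation-dtree}, this open cluster $\mathcal{T}_\emptyset$ is distributed as a BGW tree with offspring law $\Bin(d,p)$, so the percolation phase transition translates directly into the survival/extinction phase transition of that BGW tree.

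First I would recall that for Bernoulli percolation on a transitive graph, $p_c$ is the infimum of parameters $p$ for which the root cluster is infinite with positive probability; on the tree $T_d$ this is exactly the event $\{\mathcal{T}_\emptyset \notin \mathbb{T}_0\}$ of non-extinction of the associated BGW tree. Next I would compute the mean of the offspring distribution: for $\xi \sim \Bin(d,p)$ we have $m = \mathbb{E}[\xi] = dp$. The extinction phase transition for BGW processes stated in Subsection~\ref{subsection:extinctionBGW} says that the tree is finite almost surely if and only if $m \leq 1$, and survives with positive probability if and only if $m > 1$. Therefore $\theta(p) > 0$ if and only if $dp > 1$, i.e. $p > 1/d$, which gives
\[
	p_c(T_d) = \inf\{ p \in [0,1] : dp > 1 \} = \frac{1}{d}.
\]

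The only point requiring mild care is the behavior exactly at $p = 1/d$, where the mean equals $1$. Here the offspring distribution $\Bin(d, 1/d)$ is non-degenerate, so the standard BGW dichotomy applies: a critical BGW tree with $m = 1$ and non-trivial offspring law is finite almost surely, hence $\theta(1/d) = 0$. This is consistent with the definition of $p_c$ as an infimum, and confirms that the transition occurs precisely at $1/d$. I would also note in passing that one must verify the offspring law is genuinely non-trivial (i.e. $\mu(1) \neq 1$ and $\mu(0)+\mu(1) < 1$) so that the results of Subsection~\ref{subsection:extinctionBGW} apply; for $\Bin(d,p)$ with $d \geq 2$ and $p \in (0,1)$ this holds automatically.

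I do not expect any genuine obstacle here, as the proof is essentially an application of the preceding proposition followed by the elementary computation of the mean of a binomial. The main conceptual step is simply recognizing that percolation survival on $T_d$ is the same event as BGW survival, which is exactly the content of Proposition~\ref{prop:percolation-dtree}; everything else reduces to evaluating when $dp$ crosses the threshold $1$.
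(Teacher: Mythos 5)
Your proof is correct and takes exactly the route the paper intends: the paper presents this proposition as a direct consequence of Proposition~\ref{prop:percolation-dtree} together with the BGW extinction dichotomy of Subsection~\ref{subsection:extinctionBGW}, which is precisely your argument (identify the root cluster with a BGW tree of offspring law $\Bin(d,p)$, compute $m = dp$, and locate the survival threshold at $dp = 1$). Your additional care at the critical point $p = 1/d$, where the non-degenerate critical offspring law forces almost sure extinction, is a welcome refinement that the paper leaves implicit.
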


A generalization of the previous proposition to general trees involves the definition of the \emph{branching number}. On an infinite tree, the branching number corresponds to the average number of branches per vertex. This notion arises from the work of Furstenberg on the Hausdorff dimension of a tree~\cite{Furstenberg}. However, its precise definition goes beyond the scope of this survey; we refer the reader to~\cite{Lyons90} or~\cite[Section 1.2]{lyons2017probability}. We would like to point out that the branching number of a $d$-ary tree is $d$. In this sense, the following theorem extends Proposition~\ref{prop:critical-dtree}.

\begin{theorem}[{\cite{Lyons90}}] \label{thm:branchingPerc}
	Let $T$ be an infinite tree with branching number $b$. Then its critical percolation parameter is  $p_c (T) = 1/b $. 
\end{theorem}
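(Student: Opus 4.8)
The plan is to prove the two inequalities $p_c(T)\ge 1/b$ and $p_c(T)\le 1/b$ separately, exploiting the two dual descriptions of the branching number: cutsets for the lower bound and flows for the upper bound. Recall that a \emph{cutset} $\Pi$ is a finite set of edges meeting every infinite simple ray from the root $\rho$, that $|e|$ denotes the generation of an edge $e$ (its distance to $\rho$), and that $b=\operatorname{br}(T)$ may be characterized through cutsets as $b=\inf\{\lambda>1:\ \inf_{\Pi}\sum_{e\in\Pi}\lambda^{-|e|}=0\}$.

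For the lower bound I would use a first-moment argument. Fix $p<1/b$ and put $\lambda=1/p>b$. Since $\lambda>\operatorname{br}(T)$, there are cutsets $\Pi$ with $\sum_{e\in\Pi}p^{|e|}=\sum_{e\in\Pi}\lambda^{-|e|}$ arbitrarily small. If $\mathcal{C}_\rho$ is infinite it contains an infinite ray, which crosses every cutset, so some edge of $\Pi$ lies on an open path from $\rho$; as $T$ is a tree, the path from $\rho$ to the far endpoint of $e$ is unique and open with probability exactly $p^{|e|}$. A union bound then gives $\theta(p)\le\inf_{\Pi}\sum_{e\in\Pi}p^{|e|}=0$, so $p\le p_c(T)$ for every $p<1/b$, i.e.\ $p_c(T)\ge 1/b$.

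For the upper bound I would run the second-moment method fueled by a finite-energy flow, and this flow is the step I expect to be the main obstacle. Fix $p>1/b$ and set $\lambda=1/p<b$. The key lemma is that $\lambda<\operatorname{br}(T)$ yields a unit flow $\phi$ from $\rho$ to infinity of finite $\lambda$-energy, $\mathcal{E}:=\sum_{j\ge1}\lambda^{j}\sum_{|v|=j}\phi(v)^2<\infty$, where $\phi(v)$ is the flow through the edge entering $v$. I would obtain it from the max-flow min-cut theorem on $T$: picking $\lambda<\mu<b$, positivity of $\inf_{\Pi}\sum_{e\in\Pi}\mu^{-|e|}$ produces a nonzero flow obeying the capacities $\phi(v)\le\mu^{-|v|}$, whence $\sum_v\phi(v)^2\lambda^{|v|}\le\sum_v\phi(v)(\lambda/\mu)^{|v|}=\sum_{j\ge1}(\lambda/\mu)^{j}\sum_{|v|=j}\phi(v)$ converges because the flow through each generation is constant and $\lambda/\mu<1$; rescaling to total flow $1$ keeps the energy finite.

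Finally I would define $W_n=\sum_{|v|=n}p^{-n}\phi(v)\,\ind_{\{\rho\leftrightarrow v\}}$. Conservation gives $\E[W_n]=\sum_{|v|=n}\phi(v)=1$, and splitting the two-point event at the most recent common ancestor (the shared path of length $|u\wedge v|$ open, then two disjoint paths to $u$ and $v$) gives $\mathbf{P}^{T}_{p}(\rho\leftrightarrow u,\ \rho\leftrightarrow v)=p^{2n-|u\wedge v|}$; a summation by parts then yields $\E[W_n^2]=1+(1-p)\sum_{j=1}^{n}\lambda^{j}\sum_{|v|=j}\phi(v)^2\le 1+(1-p)\mathcal{E}$, bounded uniformly in $n$. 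By the Paley--Zygmund inequality, $\mathbf{P}^{T}_{p}(A_n)\ge\mathbf{P}^{T}_{p}(W_n>0)\ge \E[W_n]^2/\E[W_n^2]\ge c>0$ for all $n$, where $A_n=\{\rho\leftrightarrow\text{generation }n\}$. Since the events $A_n$ decrease to $\{|\mathcal{C}_\rho|=\infty\}$, continuity from above gives $\theta(p)\ge c>0$, hence $p\ge p_c(T)$. Together with the lower bound this yields $p_c(T)=1/b$.
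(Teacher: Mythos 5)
Your argument is correct and complete, but there is nothing in the paper to compare it against: this is a survey, and the statement is quoted with a citation to Lyons (1990) — the text even defers the definition of the branching number to that reference, so the paper contains no proof of its own. What you have written is, in essence, Lyons' original argument (the one also presented in Lyons--Peres): the cutset/first-moment bound for $p_c(T)\ge 1/b$, and, for $p_c(T)\le 1/b$, the construction via max-flow min-cut of a flow obeying the capacities $\mu^{-|e|}$ with $1/p=\lambda<\mu<b$ (which is exactly the standard device for getting finite $\lambda$-energy), followed by the second-moment computation $\E[W_n^2]\le 1+(1-p)\mathcal{E}$ from the identity $\lambda^{j}=1+(1-p)\sum_{k=1}^{j}\lambda^{k}$ together with flow conservation, and then Paley--Zygmund. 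Two steps you leave implicit are standard but worth naming: the identity $\sum_{u,v:\,|u\wedge v|\ge k}\phi(u)\phi(v)=\sum_{|w|=k}\phi(w)^{2}$ is flow conservation applied to each ancestor $w$ at generation $k$; max-flow min-cut on an infinite network requires a compactness/weak-limit argument over finite truncations; and the convergence $A_n\downarrow\{\vert\mathcal{C}_\rho\vert=\infty\}$ uses K\"onig's lemma, i.e.\ local finiteness, which is covered by the paper's standing assumption that all infinite graphs are locally finite. None of these is a gap; your proposal is a faithful reconstruction of the proof in the cited source.
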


\subsection{Coloring percolation clusters at random}\label{sec:DaC}

In~\cite{Haggstrom}, Häggström  introduced  a percolation model on a graph $G$ called the {\it divide and color  model} (hereafter abbreviated as \emph{DaC}). This model generates a random coloring of the vertices of $G$. It proceeds by first performing Bernoulli bond percolation with parameter  $p$ on the graph $G$ (see Figure~\ref{fig6}) and then assigning a random color, chosen according to a prescribed probability distribution on the finite set $[d]$, to each percolation cluster. The coloring of different clusters is done independently (as illustrated in Figure~\ref{figdac}). 

In what follows, we introduce the model more precisely for the case of our interest, namely, when $G$ is a tree.

\begin{figure}[h]
	\centering
	\begin{subfigure}{.5\textwidth}
		\centering
		\begin{tikzpicture}[scale=1, auto,swap]
			% Draw a 10-vertex network
			% First we draw the vertices
			\foreach \pos/\name in {{(0,2)/a}, {(2,1)/b}, {(4,1)/c}, {(6,2)/d},
				{(0,0)/e}, {(3,0)/f}, {(6,0)/g},
				{(2,-1)/h}, {(4,-1)/i}, {(6,-2)/j}}
			\node[vertex] (\name) at \pos {$\name$};
			% Connect vertices with edges and draw weights
			\foreach \source/ \dest /\weight in {b/a/, c/b/, d/c/,
				e/a/, e/b/, f/b/, f/c/, 
				g/c/, g/d/, g/f/,
				h/e/, h/f/,
				i/f/, i/g/,
				j/g/, j/h/, j/i/}
			\path[edge] (\source) -- node[weight] {$\weight$} (\dest);
		\end{tikzpicture}
	\end{subfigure}
	\begin{subfigure}{{.45\textwidth}}
		\centering
		\begin{tikzpicture}[scale=1, auto,swap]
			% Draw a 10-vertex network
			% First we draw the vertices
			
			\foreach \pos/\name in {{(0,2)/a}, {(2,1)/b}, {(4,1)/c}, {(6,2)/d},
				{(0,0)/e}, {(3,0)/f}, {(6,0)/g},
				{(2,-1)/h}, {(4,-1)/i}, {(6,-2)/j}}
			\node[vertex] (\name) at \pos {$\name$};
			% Connect vertices with edges and draw weights
			\foreach \source/ \dest /\weight in {b/a/,g/d/,j/h/,b/e/,g/i/,g/c/}
			\path[edgeb] (\source) -- node[weight] {$\weight$} (\dest);
			
			\foreach \vertex  in {a,b,e}
			\path node[selected vertex green] at (\vertex) {$\vertex$};

			\foreach \vertex in {d,g,c,i}
			\path node[selected vertex cyan] at (\vertex) {$\vertex$};
			
			\foreach \vertex in {h,j,f}
			\path node[selected vertex blue] at (\vertex) {$\vertex$};
			
			\foreach \source/ \dest /\weight in {b/a/,g/d/,j/h/,b/e/,g/i/,g/c/}
			\path[edge] (\source) -- node[weight] {$\weight$} (\dest);

			\begin{pgfonlayer}{background}
				\foreach \source / \dest in {b/a/,b/e/}
				\path[selected edge green] (\source.center) -- (\dest.center);
				
				\foreach \source / \dest in {g/d/,g/i/,g/c/}
				\path[selected edge cyan] (\source.center) -- (\dest.center);
				
				\foreach \source / \dest in {j/h}
				\path[selected edge blue] (\source.center) -- (\dest.center);
			\end{pgfonlayer}
		\end{tikzpicture}
	\end{subfigure}
	\caption{On the left-hand side, we have a finite connected graph, while on the right-hand side, we see a realization of a  DaC percolation. This realization was obtained by applying a random coloring to the Bernoulli percolation illustrated in Figure~\ref{fig6}. 
	In this example, the vertices $ \{  f, h, j \} $ belong to the same DaC connected component, since they share the same color and are connected in the original graph.}
	\label{figdac}
\end{figure}
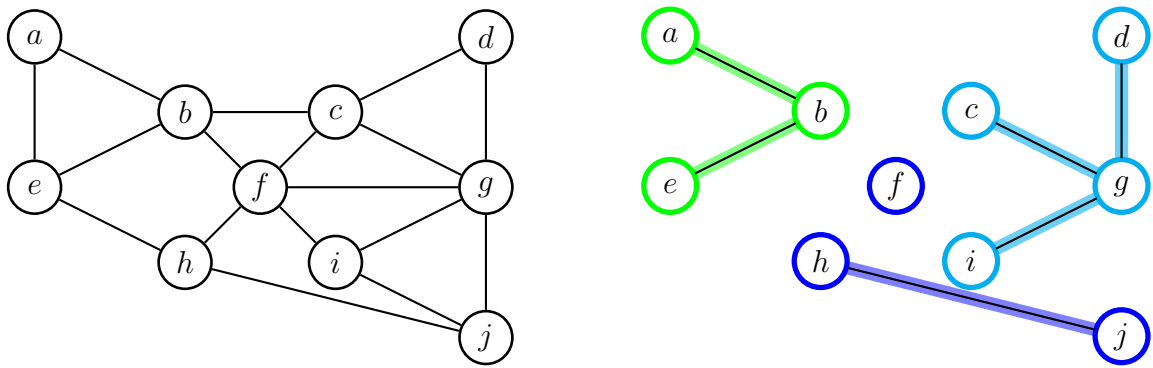

\subsubsection{Coloring percolation on trees}\label{subsubsec: dac on trees}

Let $\VEC{t}=(V,E)$  be an infinite tree, and let there be $d \geq 2$ different colors indexed by the set $[d]$. In addition to the percolation parameter $p$ used to define Bernoulli bond percolation, we consider a probability distribution $\VEC{a}=(a_1,\dots,a_d)$ on the set of colors  $[d]$. The coloring is done according to the following two-step procedure:
\begin{itemize}
	\item[] {\bf Step 1}:  Perform Bernoulli bond percolation on $\VEC{t}$ (as described in Subsection~\ref{sec:BerPer}). As a result, we obtain a random subgraph of $\VEC{t}$, denoted by $\mathcal{O}$, and we let $\mathscr{C}$ be its set of clusters. For clarity, we will refer to the elements of $\mathscr{C}$ as \emph{percolation clusters}.
	\item[] {\bf Step 2}: For each percolation cluster in $\mathscr{C}$, assign the same color to all its vertices. This color is chosen according to the distribution $\VEC{a}$, independently for different clusters. The resulting coloring $\Pi$  is defined by a probability measure on $[d]^V$, and the law of $\Pi$ is denoted by $\mathbb{P}_{p, d, \VEC{a}}^{\VEC{t}}$. 
\end{itemize}

Once the coloring $\Pi$ is obtained, the \emph{DaC connected components} are defined as the sets of connected vertices (in the graph $\VEC{t}$) that share the same color.

Throughout this paper, we focus on the DaC model with just two colors.
Hence, we use the notation $\mathbb{P}_{p, 2, a_1}$, where $a_1$ is the probability of coloring a vertex on the graph with color $1$. Note that $1-a_1$
is the probability of coloring the vertex with color 2 (different from color $1$). The development in~\cite{Haggstrom} includes the study of the DaC model for infinite trees and the corresponding phase transition results for $d=2$.
A reason for considering only two colors is that the question of whether the DaC measure $\mathbb{P}_{p, d, \VEC{a}}^{\VEC{t}}$ produces an infinite connected component of color $i\in [d]$ with $d\geq 2$ can be answered by examining the measure $\mathbb{P}_{p, 2, a_i}^{\VEC{t}}$, since $1-a_i$ is the probability of not coloring the vertex with color 1.
We summarize the phase transition results of~\cite{Haggstrom} in the following proposition.

\begin{proposition}[{\cite[Propositions 2.7 and 2.8]{Haggstrom}}] \label{prop:Haggstrom}   
	Let $\Pi$ be the resulting coloring obtained by applying the DaC model to an infinite tree $\VEC{t}$ with parameters  $p \in [0,1]$, $d=2$ and $ \VEC{a}=(a_{1}, a_2)$. Then
	\[
    \mathbb{P}_{p, 2, a_1}^{\VEC{t}}
    \left( \Pi \text{ contains an infinite DaC connected component of color } 1 \right) > 0.
  \] 
  if and only if
  \[
		\mathbf{P}_{\widetilde{p}}^{\VEC{t}}
    \left( \mathcal{O} \text{ contains an infinite cluster} \right) >0
  \]
   where $ \widetilde{p} = 1 - (1-p)(1 - a_{1})$, and $\mathbf{P}_{\widetilde{p}}^{\VEC{t}}$ is the probability measure defined in~\eqref{eq:perco-measure-infty}.
\end{proposition}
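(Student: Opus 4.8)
The plan is to translate the monochromatic‑percolation question for the two‑color DaC model into an ordinary Bernoulli percolation question, exploiting crucially that $\VEC{t}$ is acyclic. Throughout I assume $a_1 \in (0,1)$ to avoid degenerate cases (if $a_1 = 0$ no vertex is ever colored $1$, and the equivalence fails). Here a \emph{color-$1$ component} means a maximal set of vertices, all colored $1$, that is connected through edges of $\VEC{t}$ (not merely through open edges); the factor $\widetilde{p} \geq p$ already signals that color-$1$ connectivity must be allowed to cross closed edges whenever both endpoints happen to receive color $1$.

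The key step, which I would prove first, is the following distributional identity. Fix a vertex $v$. Conditionally on the event $\{v \text{ has color } 1\}$, the color-$1$ component of $v$ has the same law as the open cluster $\mathcal{C}_v$ of $v$ under $\mathbf{P}_{\widetilde{p}}^{\VEC{t}}$, with $\widetilde{p} = 1 - (1-p)(1-a_1)$. I would establish this by a breadth-first exploration of the color-$1$ component of $v$, revealing randomness edge by edge. At a generic step we examine a previously untouched edge $\{x,y\}$ with $x$ already known to be color $1$ and $y$ unexplored, and we first reveal its Bernoulli-$p$ status. If it is open, then $y$ lies in the same percolation cluster as $x$, which is already color $1$, so $y$ joins and no color-coin is consumed. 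If it is closed (probability $1-p$), then $y$ starts a fresh percolation cluster sitting in a subtree disjoint from everything explored so far — this is exactly where acyclicity is used — so its color is an independent $\VEC{a}$-coin and $y$ joins with probability $a_1$. Hence each examined edge is \emph{crossed} with probability $p + (1-p)a_1 = \widetilde{p}$, and, because both the edge coin and (when needed) the cluster-color coin are fresh, independently of the exploration history. This is precisely the law of the exploration of $\mathcal{C}_v$ under $\mathbf{P}_{\widetilde{p}}^{\VEC{t}}$, proving the identity.

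Granting the identity, the proposition follows quickly. Since $V$ is countable, the DaC coloring contains an infinite color-$1$ component with positive probability if and only if $\mathbb{P}_{p,2,a_1}^{\VEC{t}}(v \in \text{an infinite color-}1\text{ component}) > 0$ for some $v \in V$, and the same countability remark applies to the existence of an infinite cluster under $\mathbf{P}_{\widetilde{p}}^{\VEC{t}}$. By the identity together with $\mathbb{P}_{p,2,a_1}^{\VEC{t}}(v \text{ has color } 1) = a_1$, one has
\[
	\mathbb{P}_{p,2,a_1}^{\VEC{t}}\bigl( v \in \text{an infinite color-}1\text{ component} \bigr) = a_1 \, \mathbf{P}_{\widetilde{p}}^{\VEC{t}}\bigl( |\mathcal{C}_v| = \infty \bigr).
\]
As $a_1 > 0$, the left-hand side is positive for some $v$ if and only if $\mathbf{P}_{\widetilde{p}}^{\VEC{t}}( |\mathcal{C}_v| = \infty ) > 0$ for some $v$, which is exactly the statement that $\mathbf{P}_{\widetilde{p}}^{\VEC{t}}$ produces an infinite open cluster with positive probability. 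Both implications of the stated equivalence follow.

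The main obstacle is the independence bookkeeping in the exploration: a given percolation cluster carries a single color, yet in the edge-by-edge reveal its color-coin is consumed only once, at the closed edge through which the cluster is first entered, while its internal open edges are crossed for free. Making rigorous that this nonetheless yields genuinely i.i.d.\ Bernoulli$(\widetilde{p})$ crossings — rather than hidden within-cluster correlations — is the crux, and it rests entirely on $\VEC{t}$ being a tree: no cycle can make a closed edge re-enter an already-explored cluster, and along the unique path between two vertices the monochromaticity constraint reduces to requiring color $1$ on the finitely many \emph{distinct} clusters that the path visits.
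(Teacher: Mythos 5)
Your proof is correct and takes essentially the same route as the paper (which follows H\"aggstr\"om's original argument): the crux in both is that a neighbor of a color-$1$ vertex inherits color $1$ with probability $p + (1-p)a_1 = \widetilde{p}$ --- either by lying in the same open cluster or by entering a fresh cluster that independently draws color $1$ --- so that the color-$1$ component of a fixed vertex is distributed as a Bernoulli-$\widetilde{p}$ cluster, with acyclicity of $\VEC{t}$ guaranteeing the required independence. The paper only sketches this identity via the root-and-children computation, whereas your edge-by-edge exploration and the countability bookkeeping (including the caveat $a_1>0$) make the same idea rigorous.
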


According to~\cite[Proof of Proposition 2.7]{Haggstrom}, the idea behind the proof of Proposition~\ref{prop:Haggstrom} is that a DaC connected component (of a given color and containing a given vertex) has the same distribution as a cluster in Bernoulli site percolation (see Figure~\ref{fig:8}). However, one must be careful, as a configuration determined by the DaC model on  $\VEC{t}$ is not equivalent in distribution to a configuration given by Bernoulli site percolation on the vertices of $\VEC{t}$ (see \cite[Remark p.227]{Haggstrom}).

Indeed,  the proof of Proposition~\ref{prop:Haggstrom} examines each DaC connected component using an exploration argument. Without loss of generality, let us explain this by analyzing a genealogical tree, focusing on the root $\emptyset$ and its children.
Note that the offspring of the root have two possibilities for inheriting the same color as their mother:
\begin{itemize}
	\item [1)] 
	A child of the root belongs to the same percolation cluster as its mother; this occurs with probability $p$.
	\item [2)] 
	A child of the root belongs to a different percolation cluster than its mother, but this connected component still chooses the mother's color. This event has probability  $(1-p)a_{\emptyset} $, where $a_{\emptyset}$ 
	is the probability of coloring a cluster with the same color as the root.
\end{itemize}
Therefore, a child of the root has the same color as its mother with probability 
\[
	p + (1-p)a_{\emptyset} = 1 - (1-p)(1 - a_{\emptyset}).
\]

Let $\widetilde{\VEC{t}}_{\emptyset}$ denote the subtree in which every individual has the same color as the root and no ancestors of another color. Thanks to the previous observation, we can directly conclude the following result.

\begin{proposition}
Consider the DaC model on an infinite tree $\VEC{t}$ with parameters $p \in [0,1]$, $d=2$, and let $a_{\emptyset}$ be the probability of coloring a percolation cluster with the same color as the root. We have that
  \[
    \mathbb{P}_{p, d, a_{\emptyset}}^{\VEC{t}}
    \left(  \widetilde{\VEC{t}}_\emptyset \text{ has infinite size} \right) > 0
  \] 
  if and only if
  \[  \mathbf{P}_{\widetilde{p}}^{\VEC{t}}
    \left( \mathcal{C}_{\emptyset} \text{ has infinite size} \right) >0,
  \]
  where $ \widetilde{p} = 1 - (1-p)(1 - a_{\emptyset})$, and $\mathcal{C}_{\emptyset}$ is the root cluster obtained by applying Bernoulli percolation with parameter $\widetilde{p}$ on $\VEC{t}$.
\end{proposition}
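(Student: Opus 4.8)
The plan is to prove the stronger statement that $\widetilde{\VEC{t}}_\emptyset$ and $\mathcal{C}_\emptyset$ have the \emph{same law}, from which the equivalence of the two survival events follows at once, since the two clusters are then infinite with equal probability. Working conditionally on the color of the root $\emptyset$, so that $a_\emptyset$ is the $\VEC{a}$-mass of that fixed color, I would show that $\widetilde{\VEC{t}}_\emptyset$ is an open cluster of the root for Bernoulli bond percolation of parameter $\widetilde{p}=1-(1-p)(1-a_\emptyset)$. The engine is the recursive self-similarity of a tree: both $\widetilde{\VEC{t}}_\emptyset$ and $\mathcal{C}_\emptyset$ should obey the same offspring recursion, namely that each child of the root is kept independently with probability $\widetilde{p}$, and that conditionally on a child $v$ being kept, the restriction of the cluster to the subtree $\VEC{t}_{v}$ is an independent copy of the corresponding cluster on $\VEC{t}_{v}$ rooted at $v$. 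Since this is exactly how a Bernoulli$(\widetilde{p})$ cluster of the root on a tree is generated (the general-tree analogue of Proposition~\ref{prop:percolation-dtree}), matching the two recursions proves the distributional identity.

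First I would reveal the edges incident to $\emptyset$ and split its children into the set $O$ of those joined to $\emptyset$ by an open edge and the set $C$ of the remaining ones. A child $v\in O$ lies in the root's Bernoulli cluster, hence carries the root's color and belongs to $\widetilde{\VEC{t}}_\emptyset$; a child $v\in C$ starts a fresh cluster contained in $\VEC{t}_{v}$, which receives the root's color independently with probability $a_\emptyset$. This reproduces exactly the two cases computed just before the statement and gives $\mathbb{P}(v\in\widetilde{\VEC{t}}_\emptyset)=p+(1-p)a_\emptyset=\widetilde{p}$ for every child $v$. For the recursive step I would check that, conditionally on $v\in\widetilde{\VEC{t}}_\emptyset$, the configuration inside $\VEC{t}_{v}$ is in both cases a DaC configuration with the cluster of $v$ colored as the root: when $v\in O$ the relevant part of the root's cluster is $\{u\in\VEC{t}_{v}:\text{the path }v\to u\text{ is open}\}$, and when $v\in C$ the cluster of $v$ is the very same set; in either case every other cluster of $\VEC{t}_{v}$ is colored independently. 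Hence $\widetilde{\VEC{t}}_\emptyset\cap\VEC{t}_{v}$ has the law of $\widetilde{\VEC{t}}_{v}$ for DaC on $\VEC{t}_{v}$, which closes the recursion.

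To obtain independence across children I would decompose the randomness into the per-subtree data (the edge from $\emptyset$ to $v$, the edges strictly inside $\VEC{t}_{v}$, and the colors of the clusters lying inside $\VEC{t}_{v}$), which are mutually independent, together with the single color of the root's cluster, which is the only variable shared by several subtrees. Conditioning on the root's color fixes this shared variable, after which $\widetilde{\VEC{t}}_\emptyset\cap\VEC{t}_{v}$ becomes a function of independent data for distinct $v$; this yields the required independence and completes the induction, whence $\widetilde{\VEC{t}}_\emptyset\overset{d}{=}\mathcal{C}_\emptyset$ and the claimed equivalence.

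I expect the main obstacle to be precisely this independence and cluster-extent point: when the connecting edge is open, the root's cluster spills into $\VEC{t}_{v}$ and is colored by the same root-cluster color shared across all such subtrees, so a naive argument would suggest spurious correlations. The resolution I would emphasize is that, once we condition on the root's color, this shared color is no longer random, and the open-edge and closed-edge cases then produce an \emph{identical} conditional law inside each subtree; this is exactly the subtlety flagged by Häggström's remark that DaC is not Bernoulli site percolation, and it is the tree structure (unique paths and disjoint subtrees) that makes the bond-percolation description with parameter $\widetilde{p}$ correct.
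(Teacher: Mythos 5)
Your proposal is correct and takes essentially the same approach as the paper: the paper's own justification is precisely the two-case per-child computation $p+(1-p)a_{\emptyset}=\widetilde{p}$ together with H\"aggstr\"om's observation that the root's same-colored component has the law of a Bernoulli($\widetilde{p}$) cluster, including the same caveat about DaC not being site percolation, which you resolve as the paper does by conditioning on the root's color. Your write-up simply makes explicit the recursive self-similarity and independence-across-subtrees details that the paper leaves implicit with ``we can directly conclude.''
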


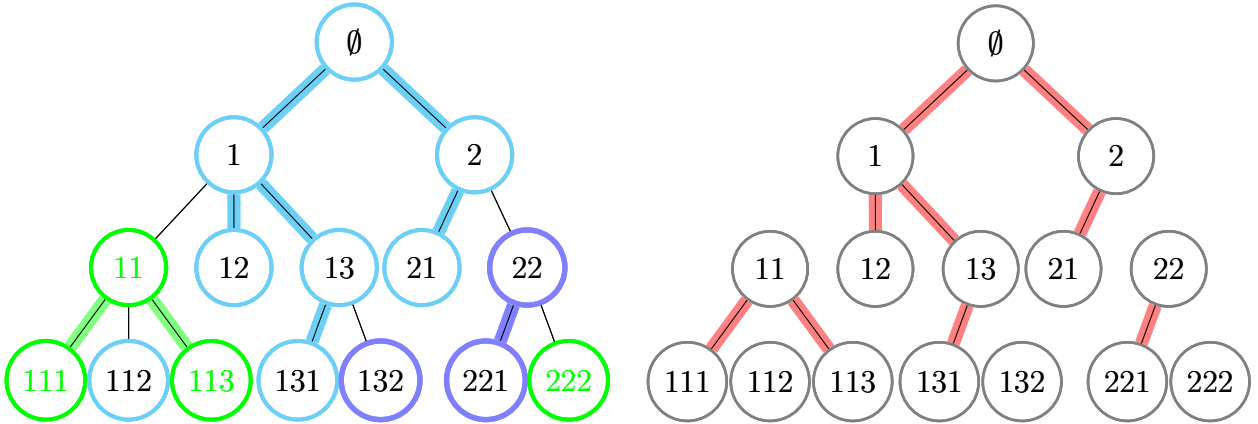
\begin{figure}[h]
	\centering
	\begin{subfigure}{.5\textwidth}
		\centering
		\begin{tikzpicture}
	[
	level 1/.style={sibling distance=32mm},
	level 2/.style={sibling distance=14mm},
	level 3/.style={sibling distance=11mm},
	%			level 4/.style={sibling distance=12mm},
	every node/.style={circle, draw=cyan!50,line width=1.5pt, minimum size = 1 cm},
	bluen/.style={circle, draw=blue!50,line width=2pt, minimum size = 1 cm},
	greenn/.style={circle, draw=green!50,line width=2pt, minimum size = 1 cm},
	emph/.style={edge from parent/.style={draw,line width=1.5pt,-,black!50}},
	rede/.style={edge from parent/.style={draw,line width=1.5pt,-,red!50}},
	bluee/.style={edge from parent/.style={draw,line width=5pt,-,blue!50}},
	greene/.style={edge from parent/.style={draw,line width=5pt,-,green!50}},
	cyane/.style={edge from parent/.style={draw,line width=5pt,-,cyan!50}},
	norm/.style={edge from parent/.style={solid,black,thin,draw}},
	noe/.style={edge from parent/.style={}},
	]
	\node {$\emptyset$}
		child[norm] {node {1}
			child[norm] {node[green] {11}
			child[norm] {node[green] {111}}
				child {node {112}}
				child[norm] {node[green] {113}}}
			child[norm] {node {12}}
			child {node {13}
		    child {node {131}}
	        child[norm] {node[bluen] {132}}}}
		child[norm] {node {2}
		child[norm] {node {21}}
	    child[norm] {node[bluen] {22}
        child[norm] {node[bluen] {221}}
        child {node[green] {222}}}}
		;
	\begin{pgfonlayer}{background}
	\node {$\emptyset$}
		child[cyane] {node {1}
			child[norm] {node[green] {11}
			child[greene] {node[green] {111}}
				child {node {112}}
				child[greene] {node[green] {113}}}
			child[cyane] {node {12}}
			child {node {13}
		    child {node {131}}
	        child[norm] {node[bluen] {132}}}}
		child[cyane] {node {2}
		child[cyane] {node {21}}
	    child[norm] {node[bluen] {22}
        child[bluee] {node[bluen] {221}}
        child {node[green] {222}}}}
		;
	\end{pgfonlayer}
\end{tikzpicture}
	\end{subfigure}
	\begin{subfigure}{{.45\textwidth}}
		\centering
         \begin{tikzpicture}			[
		level 1/.style={sibling distance=32mm},
		level 2/.style={sibling distance=14mm},
		level 3/.style={sibling distance=11mm},
			%			level 4/.style={sibling distance=12mm},
			every node/.style={circle, draw=black!50,line width=1pt, minimum size = 1 cm},
			bluen/.style={circle, draw=blue!50,line width=2pt, minimum size = 1 cm},
			greenn/.style={circle, draw=green!50,line width=2pt, minimum size = 1 cm},
			emph/.style={edge from parent/.style={draw,line width=1.5pt,-,black!50}},
			rede/.style={edge from parent/.style={draw,line width=5pt,-,red!50}},
			bluee/.style={edge from parent/.style={draw,line width=5pt,-,blue!50}},
			greene/.style={edge from parent/.style={draw,line width=5pt,-,green!50}},
			cyane/.style={edge from parent/.style={draw,line width=5pt,-,cyan!50}},
			norm/.style={edge from parent/.style={solid,black,thin,draw}},
			noe/.style={edge from parent/.style={}},
			]
			\node {$\emptyset$}
			child[norm] {node {1}
				child[noe] {node {11}
					child[norm] {node {111}}
					child[noe] {node {112}}
					child[norm] {node {113}}}
				child {node {12}}
				child {node {13}
					child {node {131}}
					child[noe] {node {132}}}}
			child[norm] {node {2}
				child {node {21}}
				child[noe] {node {22}
					child[norm] {node {221}}
					child {node {222}}}}
			;
			\begin{pgfonlayer}{background}
			\node {$\emptyset$}
			child[rede] {node {1}
				child[noe] {node {11}
					child[rede] {node {111}}
					child[noe] {node {112}}
					child[rede] {node {113}}}
				child {node {12}}
				child {node {13}
					child {node {131}}
					child[noe] {node {132}}}}
			child[rede] {node {2}
				child {node {21}}
				child[noe] {node {22}
					child[rede] {node {221}}
					child {node {222}}}}
			;
			\end{pgfonlayer}
		\end{tikzpicture}
	\end{subfigure}
	\caption{The figure on the left shows a realization of the \emph{DaC} model on a finite tree with parameter $p$ and $d=3$. The figure on the right illustrates  the corresponding percolation, where the connected components are distinguished by color.} \label{fig:8}
\end{figure}

\section{Connections between percolation models and branching structures} \label{sec:connections}

This section is devoted to providing a representation of the BGW branching process with neutral mutations, as presented in Section~\ref{sec:GWmut}, in terms of Bernoulli percolation and the DaC model. 

In~\cite{Bertoin09}, Bertoin introduced the model of neutral mutations on infinite alleles and implicitly used percolation in BGW trees. This connection is made explicit in~\cite{BertoinUB}. In Subsection~\ref{subsec: relatbtwmodels}, we generalize this relationship in two ways: from the point of view of population genetics (BGW process with multi-allelic neutral mutations) and from the point of view of percolation theory (DaC percolation model).
First, we will discuss the relation between BGW processes with infinite-allele neutral mutations and Bernoulli percolation introduced by Bertoin~\cite{Bertoin09} and further studied in~\cite{Bertoin10}. Next, we will introduce the relation between DaC percolation and the MIM model. Finally, at the end of Subsection~\ref{subsec: relatbtwmodels}, we will present the \emph{restricted DaC percolation model}, related to the MDM model.

\subsection{From Bernoulli percolation to neutral mutations and back}
\label{subsec: relatbtwmodels}
\subsubsection{Bernoulli percolation on BGW trees} \label{subsub:PercolationNeutralMutations}

In Subsection~\ref{sec:BerPer}, we defined Bernoulli percolation with percolation parameter $p$ on a deterministic graph. 
In this section, we apply the same procedure to a random tree, in particular, to a BGW tree. Note that by taking
$p=1-r$, where $r$ is the mutation parameter in a BGW process with infinite alleles (neutral mutation), we obtain a unique correspondence between both models. Roughly speaking, a mutation corresponds to closing an edge.
In terms of population genetics, opening an edge can be interpreted as the mother and child sharing the same type. Indeed, by the branching property, it is sufficient to analyze the root $\emptyset$ and its children. Conditioned on $\{\xi^{(+) }=v \}$, the distribution of the number of clones is given by  
\begin{equation*}
	\mathbb{P} \left( \xi^{(c)}=u \vert \xi^{(+) }=v  \right)= \binom{v }{u}(1-r)^{u}r^{v-u},
\end{equation*}
i.e.,  conditioned on  $\{\xi^{(+)} =v \} $, we have $\xi^{(c)} \mid_ {\{\xi^{(+) }=v \}} \sim$Binomial$( v, 1-r)$. Therefore,  for each child, its  marginal conditional measure corresponds to  a Bernoulli distribution with parameter $1-r$. 
This corresponds to the marginal of the measure $\mathbf{P}^G_{1-r}$ defined in~\eqref{eq:perco-measure} for finite graphs. An example of this comparison in shown in Figure~\ref{fig:9}.

\begin{figure}[h]
	\centering
	\begin{subfigure}{{.45\textwidth}}
		\centering
		\begin{tikzpicture}
	  	[
      level 1/.style={sibling distance=45mm},
      level 2/.style={sibling distance=30mm},
      level 3/.style={sibling distance=12mm},
      level 4/.style={sibling distance=10mm},]
      \node[circle, draw,inner sep=6](root) { $\emptyset$}
      child {node[circle,draw,inner sep=6] {1}
      	child {node[circle,draw,inner sep=5] {11}
      		child {node[regular polygon,
      			regular polygon sides=6,draw,inner sep=0.2] {111} edge from parent node {\textcolor{red}{{\sf X}}}}
      	}
      	child {node[regular polygon,
      		regular polygon sides=5,draw,inner sep=3] {12} 
      		child {node[regular polygon,
      			regular polygon sides=5,draw,inner sep=0.2] {121}}edge from parent node {\textcolor{red}{{\sf X}}}} }
      child {node[regular polygon,
      	regular polygon sides=4,draw,inner sep=5] {2}
      	child {node[regular polygon,
      		regular polygon sides=4,draw,inner sep=3] {21}
      		child {node[regular polygon,
      			regular polygon sides=7,draw,inner sep=2] {212}edge from parent node {\textcolor{red}{{\sf X}}}}}edge from parent node {\textcolor{red}{{\sf X}}}};
    \end{tikzpicture}
	\end{subfigure}
	\centering
	\begin{subfigure}{{.45\textwidth}}
		\centering
			\begin{tikzpicture}
			[
			level 1/.style={sibling distance=45mm},
			level 2/.style={sibling distance=30mm},
			level 3/.style={sibling distance=12mm},
			level 4/.style={sibling distance=10mm},
			every node/.style={circle, draw,line width=1pt, minimum size = 1 cm},
			bluen/.style={circle, draw=blue!50,line width=2pt, minimum size = 1 cm},
			greenn/.style={circle, draw=green!50,line width=2pt, minimum size = 1 cm},
			cyann/.style={circle, draw=cyan!50,line width=2pt, minimum size = 1 cm},
			tealn/.style={circle, draw=teal!50,line width=2pt, minimum size = 1 cm},
			oliven/.style={circle, draw=olive!50,line width=2pt, minimum size = 1 cm},
			violetn/.style={circle, draw=violet!50,line width=2pt, minimum size = 1 cm},
			emph/.style={edge from parent/.style={draw,line width=5pt,-,darkgray!50}},
			rede/.style={edge from parent/.style={draw,line width=5pt,-,red!50}},
			bluee/.style={edge from parent/.style={draw,line width=5pt,-,blue!50}},
			greene/.style={edge from parent/.style={draw,line width=5pt,-,green!50}},
			cyane/.style={edge from parent/.style={draw,line width=5pt,-,cyan!50}},
			norm/.style={edge from parent/.style={solid,black,thin,draw}},
			noe/.style={edge from parent/.style={}},
			]
			\node[circle, draw=black!50,inner sep=6](root) { $\emptyset$}
			child[norm] {node[circle,draw=black!50,inner sep=6] {1}
				child[norm] {node[circle,draw=black!50,inner sep=5] {11}
					child[noe] {node[regular polygon,
						regular polygon sides=6,draw=black!50,inner sep=0.2] {111}}
				}
				child[noe] {node[regular polygon,
					regular polygon sides=5,draw=black!50,inner sep=3] {12} 
					child[norm] {node[regular polygon,
						regular polygon sides=5,draw=black!50,inner sep=0.2] {121}}} }
			child[noe] {node[regular polygon,
				regular polygon sides=4,draw=black!50,inner sep=5] {2}
				child[norm] {node[regular polygon,
					regular polygon sides=4,draw=black!50,inner sep=3] {21}
					child[noe] {node[regular polygon,
						regular polygon sides=7,draw=black!50,inner sep=2] {212}}}};
			\begin{pgfonlayer}{background}
			\node[circle, draw=black!50,inner sep=6](root) { $\emptyset$}
			child[rede] {node[circle,draw=black!50,inner sep=6] {1}
				child[rede] {node[circle,draw=black!50,inner sep=5] {11}
					child[noe] {node[regular polygon,
						regular polygon sides=6,draw=black!50,inner sep=0.2] {111}}
				}
				child[noe] {node[regular polygon,
					regular polygon sides=5,draw=black!50,inner sep=3] {12} 
					child[rede] {node[regular polygon,
						regular polygon sides=5,draw=black!50,inner sep=0.2] {121}}} }
			child[noe] {node[regular polygon,
				regular polygon sides=4,draw=black!50,inner sep=5] {2}
				child[rede] {node[regular polygon,
					regular polygon sides=4,draw=black!50,inner sep=3] {21}
					child[noe] {node[regular polygon,
						regular polygon sides=7,draw=black!50,inner sep=2] {212}}}};
			\end{pgfonlayer}
		\end{tikzpicture}
	\end{subfigure}
	\caption{Comparison between the infinite alleles model and Bernoulli percolation.} \label{fig:9} 
\end{figure}
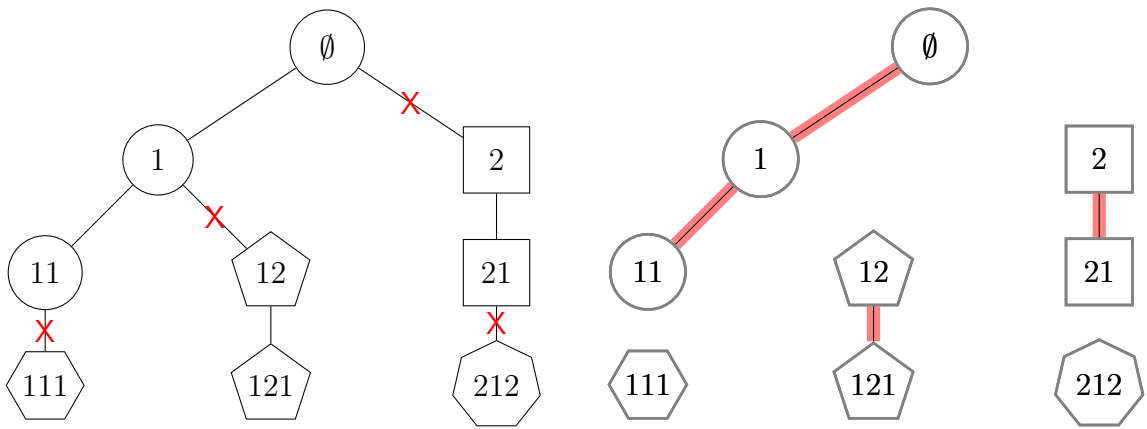

\subsubsection{Divide and color percolation on BGW trees}  \label{subsub:DaC-MIM}

In Subsection~\ref{sec:DaC}, we defined the DaC model on an infinite tree $\VEC{t}$ with parameters $p \in [0,1]$, $d\geq2$, and $\VEC{a}=(a_1,\dots,a_d)$. We can apply the same model to a BGW tree.
 As before, we can establish a correspondence with the mother-independent mutation model defined in Subsection~\ref{subsub: MDM}. To do this, we take
 $p=1-r$, where $r$ is the mutation probability. We also let $d$ be the number of types and set  $a_i= 1/d$ for all $i\in[d]$, since each mutant child chooses its color uniformly. 

Again, by the branching property, it is enough to analyze the root $\emptyset$ and its children. Assume that the type of the root is $i$.
Let $ \VEC{u} = (u_1, \ldots, u_d) \in \mathbb{N}_0^d $ and recall that  $|\VEC{u}| = \sum_{j=1}^d u_j$.
Following equations~\eqref{eq:offspringdist1} and~\eqref{eq:mothindproba}, conditioned on $\{\xi^{(+, i)}= |\VEC{u}|\}$,  the probability that the root has $\boldsymbol{\xi}^i \coloneqq (\xi^{(i,1)},\xi^{(i,2)},\dots,\xi^{(i,d)})=(u_1,u_2,..,u_d)$ children is given by   
\begin{equation}
	\mathbb{P}\left(\boldsymbol{\xi}^i= \VEC{u} ) \vert \xi^{(+, i)}= |\VEC{u}|\right)=
	\sum_{k = 1}^{u_i}
	\binom{|\VEC{u}|}{k} (1-r)^{k}
	r^{|\VEC{u}|-k}  %probability of choosing k clones and v-k mutants
	\binom{|\VEC{u}|-k}{u_1, \dots , u_i - k , \dots, u_d}
	\left(\dfrac{1}{d}\right)^{ |\VEC{u}| - k }.
\end{equation}
This measure coincides with the DaC measure $\mathbb{P}_{p, d, \VEC{a}}$ applied to the root and its children at the first generation (see Figures~\ref{fig: mthind mutation} and~\ref{fig:10}). 

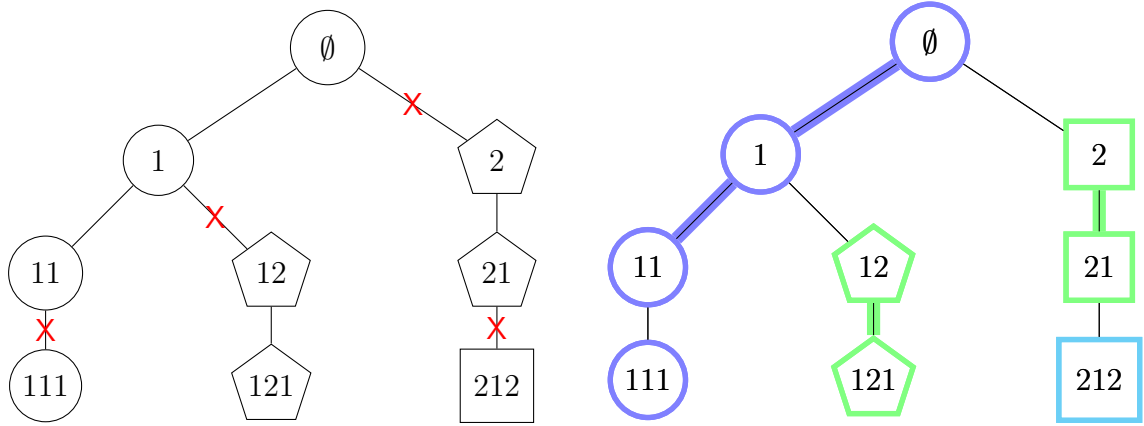
\begin{figure}[h]
	\centering
	\begin{subfigure}{{.45\textwidth}}
		\centering
		\begin{tikzpicture}
			[
			level 1/.style={sibling distance=45mm},
			level 2/.style={sibling distance=30mm},
			level 3/.style={sibling distance=12mm},
			level 4/.style={sibling distance=10mm},]
			\node[circle, draw,inner sep=6](root) { $\emptyset$}
			child {node[circle,draw,inner sep=6] {1}
				child {node[circle,draw,inner sep=5] {11}
					child {node[circle,draw,inner sep=3] {111} edge from parent node {\textcolor{red}{{\sf X}}}}
				}
				child {node[regular polygon,
					regular polygon sides=5,draw,inner sep=3] {12} 
					child {node[regular polygon,
						regular polygon sides=5,draw,inner sep=0.2] {121}}edge from parent node {\textcolor{red}{{\sf X}}}} }
			child {node[regular polygon,
				regular polygon sides=5,draw,inner sep=5] {2}
				child {node[regular polygon,
					regular polygon sides=5,draw,inner sep=3] {21}
					child {node[regular polygon,
						regular polygon sides=4,draw,inner sep=1] {212}edge from parent node {\textcolor{red}{{\sf X}}}}}edge from parent node {\textcolor{red}{{\sf X}}}};
		\end{tikzpicture}
	\end{subfigure}
	\centering
	\begin{subfigure}{{.45\textwidth}}
		\centering
		\begin{tikzpicture}
			[
			level 1/.style={sibling distance=45mm},
			level 2/.style={sibling distance=30mm},
			level 3/.style={sibling distance=12mm},
			level 4/.style={sibling distance=10mm},
			every node/.style={circle, draw=darkgray!50,line width=2pt, minimum size = 1 cm},
			bluen/.style={circle, draw=blue!50,line width=2pt, minimum size = 1 cm},
			greenn/.style={circle, draw=green!50,line width=2pt, minimum size = 1 cm},
			cyann/.style={circle, draw=cyan!50,line width=2pt, minimum size = 1 cm},
			tealn/.style={circle, draw=teal!50,line width=2pt, minimum size = 1 cm},
			oliven/.style={circle, draw=olive!50,line width=2pt, minimum size = 1 cm},
			violetn/.style={circle, draw=violet!50,line width=2pt, minimum size = 1 cm},
			emph/.style={edge from parent/.style={draw,line width=5pt,-,darkgray!50}},
			rede/.style={edge from parent/.style={draw,line width=5pt,-,red!50}},
			bluee/.style={edge from parent/.style={draw,line width=5pt,-,blue!50}},
			greene/.style={edge from parent/.style={draw,line width=5pt,-,green!50}},
			cyane/.style={edge from parent/.style={draw,line width=5pt,-,cyan!50}},
			norm/.style={edge from parent/.style={solid,black,thin,draw}}
			]
			\node[circle, draw=blue!50,inner sep=6](root) { $\emptyset$}
			child[norm] {node[circle,draw=blue!50,inner sep=6] {1}
				child {node[circle,draw=blue!50,inner sep=5] {11}
					child[norm] {node[circle,draw=blue!50,inner sep=0.2] {111}}
				}
				child[norm] {node[regular polygon,
					regular polygon sides=5,draw=green!50,inner sep=3] {12} 
					child[norm] {node[regular polygon,
						regular polygon sides=5,draw=green!50,inner sep=0.2] {121}}} }
			child {node[regular polygon,
				regular polygon sides=4,draw=green!50,inner sep=5] {2}
				child[norm] {node[regular polygon,
					regular polygon sides=4,draw=green!50,inner sep=3] {21}
					child[norm] {node[regular polygon,
						regular polygon sides=4,draw=cyan!50,inner sep=2] {212}}}};
			\begin{pgfonlayer}{background}
			\node[circle, draw=blue!50,inner sep=6](root) { $\emptyset$}
			child[bluee] {node[circle,draw=blue!50,inner sep=6] {1}
				child {node[circle,draw=blue!50,inner sep=5] {11}
					child[norm] {node[circle,draw=blue!50,inner sep=0.2] {111}}
				}
				child[norm] {node[regular polygon,
					regular polygon sides=5,draw=green!50,inner sep=3] {12} 
					child[greene] {node[regular polygon,
						regular polygon sides=5,draw=green!50,inner sep=0.2] {121}}} }
			child {node[regular polygon,
				regular polygon sides=4,draw=green!50,inner sep=5] {2}
				child[greene] {node[regular polygon,
					regular polygon sides=4,draw=green!50,inner sep=3] {21}
					child[norm] {node[regular polygon,
						regular polygon sides=4,draw=cyan!50,inner sep=2] {212}}}};
			\end{pgfonlayer}
		\end{tikzpicture}
	\end{subfigure}
	\caption{Comparison between the MIM model and DaC percolation.} \label{fig:10}
\end{figure}

\subsubsection{Restricted divide and  color percolation on BGW trees} \label{susub:RestrictedDaC-MDM}

Let us now consider the (uniform) DaC percolation model on a BGW tree, conditioned on the case where a mother and child belonging to different percolation clusters cannot have the same color.

The following procedure defines the \emph{restricted DaC model}:
\begin{itemize}
	\item[] {\bf Step 1}:  Perform Bernoulli bond percolation on $\VEC{t}$ (as described in Subsection~\ref{sec:BerPer}). As a result, we obtain a random subgraph of open edges, denoted by $\mathcal{O}$, and let $\mathscr{C}$ denote its set of clusters (referred to as percolation clusters). Note that each cluster $K \in \mathscr{C}$ is, in particular, a subtree rooted at $\emptyset_K$. 
	\item[] {\bf Step 2}: 
	Given an assigned color for the percolation cluster of the initial root, we color the other percolation clusters in the order determined by the lexicographical order of their roots (as defined by the Ulam-Harris notation). The color of $\emptyset_K$ determines the color of every vertex in its cluster $K$. Then, 
	if the color of the mother of $\emptyset_K$ is $i$,
	the color of $K$ is chosen uniformly from the set $[d]\setminus \{i\}$. 
	The resulting coloring $\widetilde{\Pi}$  is defined by a probability measure on $[d]^V$, and the law of  $\widetilde{\Pi}$ is denoted by $\widetilde{\mathbb{P}}_{p, d, \text{unif}}^{\VEC{t}}$.
\end{itemize}
Comparing this construction with one for the DaC model on trees in Subsection~\ref{subsubsec: dac on trees},
the difference lies in the rule for choosing a color for each percolation cluster.

Similarly to the previous subsection, we can compare this percolation model with the mother-dependent mutation model defined in Subsection~\ref{subsub: MDM}. 
Let $ \VEC{u} = (u_1, \ldots, u_d) \in \mathbb{N}_0^d $ and  $|\VEC{u}| = \sum_{j=1}^d u_j$.
In this case, following equation~\eqref{eq: mothdepproba}, conditioned on $\{\xi^{(+, i)}= |\VEC{u}|\}$,  the probability that the root has $\boldsymbol{\xi}^i \coloneqq (\xi^{(i,1)},\xi^{(i,2)},\dots,\xi^{(i,d)})=(u_1,u_2,..,u_d)$ children is given by   
\begin{equation}
	\mathbb{P}\left(\boldsymbol{\xi}^i= \VEC{u} \vert \xi^{(+, i)}= |\VEC{u}| \right)=
	\binom{|\VEC{u}|}{u_i} (1-r)^{u_i}
	r^{|\VEC{u}|-u_i}  %probability of choosing k clones and v-k mutants
	\binom{|\VEC{u}|-u_i}{u_1, \dots , u_{i-1},u_{i+1}, \dots, u_d}
	\left(\dfrac{1}{d}\right)^{ |\VEC{u}| - u_i }.
\end{equation}
This coincides with the restricted uniform DaC measure $\widetilde{\mathbb{P}}_{p, d, \text{unif}}^{\VEC{t}}$ defined in the step-by-step procedure above, applied to the root and its children at the first generation (see Figure~\ref{fig:11}).

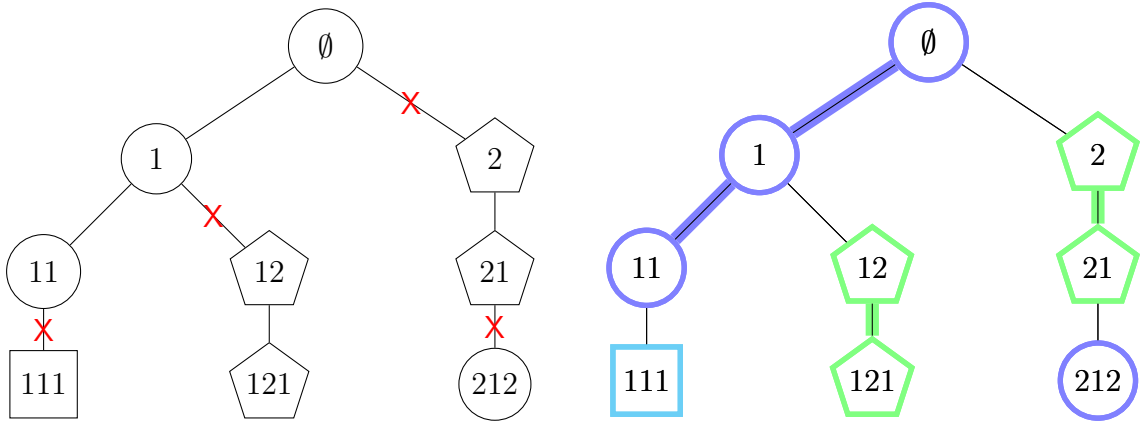
\begin{figure}[h]
	\centering
	\begin{subfigure}{{.45\textwidth}}
		\centering
		\begin{tikzpicture}
			[
			level 1/.style={sibling distance=45mm},
			level 2/.style={sibling distance=30mm},
			level 3/.style={sibling distance=12mm},
			level 4/.style={sibling distance=10mm},]
			\node[circle, draw,inner sep=6](root) { $\emptyset$}
			child {node[circle,draw,inner sep=6] {1}
				child {node[circle,draw,inner sep=5] {11}
					child {node[regular polygon,
						regular polygon sides=4,draw,inner sep=0.2] {111} edge from parent node {\textcolor{red}{{\sf X}}}}
				}
				child {node[regular polygon,
					regular polygon sides=5,draw,inner sep=3] {12} 
					child {node[regular polygon,
						regular polygon sides=5,draw,inner sep=0.2] {121}}edge from parent node {\textcolor{red}{{\sf X}}}} }
			child {node[regular polygon,
				regular polygon sides=5,draw,inner sep=5] {2}
				child {node[regular polygon,
					regular polygon sides=5,draw,inner sep=3] {21}
					child {node[circle,draw,inner sep=3] {212}edge from parent node {\textcolor{red}{{\sf X}}}}}edge from parent node {\textcolor{red}{{\sf X}}}};
		\end{tikzpicture}
	\end{subfigure}
	\begin{subfigure}{{.45\textwidth}}
		\centering
		\begin{tikzpicture}
			[
			level 1/.style={sibling distance=45mm},
			level 2/.style={sibling distance=30mm},
			level 3/.style={sibling distance=12mm},
			level 4/.style={sibling distance=10mm},
			every node/.style={circle, draw=darkgray!50,line width=2pt, minimum size = 1 cm},
			bluen/.style={circle, draw=blue!50,line width=2pt, minimum size = 1 cm},
			greenn/.style={circle, draw=green!50,line width=2pt, minimum size = 1 cm},
			cyann/.style={circle, draw=cyan!50,line width=2pt, minimum size = 1 cm},
			tealn/.style={circle, draw=teal!50,line width=2pt, minimum size = 1 cm},
			oliven/.style={circle, draw=olive!50,line width=2pt, minimum size = 1 cm},
			violetn/.style={circle, draw=violet!50,line width=2pt, minimum size = 1 cm},
			emph/.style={edge from parent/.style={draw,line width=5pt,-,darkgray!50}},
			rede/.style={edge from parent/.style={draw,line width=5pt,-,red!50}},
			bluee/.style={edge from parent/.style={draw,line width=5pt,-,blue!50}},
			greene/.style={edge from parent/.style={draw,line width=5pt,-,green!50}},
			cyane/.style={edge from parent/.style={draw,line width=5pt,-,cyan!50}},
			norm/.style={edge from parent/.style={solid,black,thin,draw}}
			]
			\node[bluen,circle, draw,inner sep=6](root) { $\emptyset$}
			child {node[bluen,circle,draw,inner sep=6] {1}
				child {node[bluen,circle,draw,inner sep=5] {11}
					child {node[cyann,regular polygon,
						regular polygon sides=4,draw,inner sep=0.2] {111} 
					}
				}
				child {node[greenn,regular polygon,
					regular polygon sides=5,draw,inner sep=3] {12} 
					child{node[greenn,regular polygon,
						regular polygon sides=5,draw,inner sep=0.2] {121}}} }
			child {node[greenn,regular polygon,
				regular polygon sides=5,draw,inner sep=5] {2}
				child {node[greenn,regular polygon,
					regular polygon sides=5,draw,inner sep=3] {21}
					child {node[bluen,circle,draw,inner sep=3] {212}}
				}
			};
			\begin{pgfonlayer}{background}
				\node[bluen,circle, draw,inner sep=6](root) { $\emptyset$}
				child[bluee] {node[bluen,circle,draw,inner sep=6] {1}
					child {node[bluen,circle,draw,inner sep=5] {11}
						child[norm] {node[cyann,regular polygon,
							regular polygon sides=4,draw,inner sep=0.2] {111} 
						}
					}
					child[norm] {node[greenn,regular polygon,
						regular polygon sides=5,draw,inner sep=3] {12} 
						child[greene] {node[greenn,regular polygon,
							regular polygon sides=5,draw,inner sep=0.2] {121}}} }
				child {node[greenn,regular polygon,
					regular polygon sides=5,draw,inner sep=5] {2}
					child[greene] {node[greenn,regular polygon,
						regular polygon sides=5,draw,inner sep=3] {21}
						child[norm] {node[bluen,circle,draw,inner sep=3] {212}}
					}
				};
			\end{pgfonlayer}
		\end{tikzpicture}
	\end{subfigure}
	\caption{Comparison between the MDM model and  restricted DaC percolation.} \label{fig:11}
\end{figure}

\subsection{The percolation phase transition from the point of view of neutral mutation models} \label{subsec: phasetran mutation}

For the Bernoulli phase transition on a tree $\mathcal{T}$, the classical interpretation of the percolation supercritical regime corresponds to the existence of an infinite cluster with probability 1. 
From the point of view of population genetics, for a BGW tree with mutations, we  can interpret the root’s cluster as a subtree $\mathcal{T}_{\emptyset}$ consisting of individuals of the same type as the root $\emptyset$. Note that a subtree is connected, so $\mathcal{T}_{\emptyset}$ does not include individuals of the same type as $\emptyset$ who have ancestors of a different type.

In the percolation supercritical regime, the size of $\mathcal{T}_{\emptyset}$ has a positive probability of being of the same order as the size of the full genealogy. If the full genealogical tree is finite, it means that the size of $\mathcal{T}_{\emptyset}$ is proportional to the size of $\mathcal{T}$. On the other hand, if $\mathcal{T}$ is infinite, this means that $\mathcal{T}_{\emptyset}$ is infinite as well.

In the following, we present some percolation results on BGW trees conditioned to be infinite, and we describe them in terms of the mutation models introduced above. 
First, we recall some results of Lyons for percolation in BGW trees and their interpretation in terms of the infinite alleles mutation model and the MDM model. Finally, we will extend Proposition~\ref{prop:Haggstrom} to the DaC model on BGW trees and describe its meaning for the MIM model.

\subsubsection{Percolation phase transition for BGW trees}

We now present a generalization of Proposition~\ref{prop:critical-dtree} and Theorem~\ref{thm:branchingPerc} to BGW trees. 

\begin{theorem}[Lyons, 1990]  \label{thm:lyons}
		Let $\mathcal{T}$ be a BGW tree whose offspring distribution has mean $m > 1$. That is, if $\xi$ is a random variable following the offspring distribution, then $m = \mathbb{E}(\xi) > 1$. Conditioned on non-extinction, we have that $p_c^{\mathcal{T}} = 1/m$ almost surely.
\end{theorem}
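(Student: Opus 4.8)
The plan is to establish the two inequalities $p_c^{\mathcal{T}} \ge 1/m$ and $p_c^{\mathcal{T}} \le 1/m$ separately, each almost surely, the first unconditionally and the second on non-extinction; since the event of having an infinite open cluster is monotone in $p$, I may test a countable dense sequence of parameters and pass to the limit, so it suffices to control a fixed $p$. Throughout I work with the \emph{quenched} percolation measure $\mathbf{P}^{\mathcal{T}}_p$ on a realized tree and exploit the correspondence from Subsection~\ref{subsec: relatbtwmodels}: deleting each edge independently with probability $1-p$ thins the offspring of every vertex, so that under the \emph{annealed} law $\P\otimes\mathbf{P}^{\mathcal{T}}_p$ the open cluster $\mathcal{C}_\emptyset$ of the root is itself a BGW tree with progeny law $\Bin(\xi,p)$ mixed over $\xi\sim\mu$, hence of mean $mp$. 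One could instead invoke the general theorem $p_c(T)=1/\mathrm{br}(T)$ together with $\mathrm{br}(\mathcal{T})=m$, but since the branching number is beyond the scope of this survey I prefer the self-contained argument below, which needs only the branching property and the extinction theory of Subsection~\ref{subsection:extinctionBGW}.

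For the lower bound I fix $p<1/m$ and estimate first moments. Writing $Z_n$ for the number of generation-$n$ vertices joined to the root by open edges, a length-$n$ path is open with probability $p^n$, so $\E[Z_n]=\E[|\text{gen}_n(\mathcal{T})|]\,p^n=(mp)^n$, whence $\E[|\mathcal{C}_\emptyset|]=\sum_{n\ge0}(mp)^n=(1-mp)^{-1}<\infty$ and $\mathcal{C}_\emptyset$ is annealed-almost-surely finite. On a tree every open cluster is the downward open cluster of its unique vertex closest to the root, and such a cluster is, conditionally on the subtree $\mathcal{T}_u$, a copy of $\mathcal{C}_\emptyset$; a countable union over $u\in\mathcal{U}$ then shows that no infinite cluster exists, annealed-almost-surely. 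By Fubini this holds $\mathbf{P}^{\mathcal{T}}_p$-almost surely for $\P$-almost every $\mathcal{T}$, and letting $p\uparrow 1/m$ along a sequence gives $p_c^{\mathcal{T}}\ge 1/m$ almost surely.

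The upper bound is the crux. Fix $p>1/m$ and set $\beta(\mathcal{T})\coloneqq\mathbf{P}^{\mathcal{T}}_p(|\mathcal{C}_\emptyset|=\infty)$. Conditioning on the root degree $N=k_\emptyset(\mathcal{T})$ and its i.i.d.\ offspring subtrees $\mathcal{T}^{(1)},\dots,\mathcal{T}^{(N)}$, the root cluster is infinite exactly when some child-edge is open and that child's downward cluster is infinite, which yields $1-\beta(\mathcal{T})=\prod_{i=1}^N\bigl(1-p\,\beta(\mathcal{T}^{(i)})\bigr)$. As each factor lies in $[0,1]$ and $p>0$, the event $\{\beta(\mathcal{T})=0\}$ coincides with $\{\beta(\mathcal{T}^{(i)})=0\ \text{for all } i\}$, so $a\coloneqq\P(\beta(\mathcal{T})=0)$ satisfies $a=\E[a^N]=f(a)$; that is, $a$ is a fixed point of the generating function $f$. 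Now $\E[\beta(\mathcal{T})]$ equals the survival probability of the thinned BGW tree of mean $mp>1$, hence is strictly positive, forcing $a<1$. Since for supercritical $\mu$ the only fixed points of $f$ in $[0,1]$ are the extinction probability $q<1$ and $1$, we get $a=q$. As $\{\beta(\mathcal{T})>0\}\subseteq\{\mathcal{T}\text{ infinite}\}$ and both events carry probability $1-q$, they agree up to a null set; thus, conditioned on non-extinction, $\beta(\mathcal{T})>0$ almost surely, i.e.\ $p\ge p_c^{\mathcal{T}}$. Letting $p\downarrow 1/m$ gives $p_c^{\mathcal{T}}\le 1/m$ almost surely on non-extinction, and combining with the lower bound finishes the proof. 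The delicate point I expect to be the main obstacle is exactly the passage from the \emph{annealed} positivity $\E[\beta]>0$ to the \emph{quenched} statement $\beta>0$ a.s.\ on survival; the fixed-point identity $a=f(a)$ is the lever that upgrades one to the other.
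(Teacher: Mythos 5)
Your proposal is correct, but it takes a genuinely different route from the paper, which in fact offers no proof of Theorem~\ref{thm:lyons}: the result is quoted from~\cite{Lyons90}, where it is deduced from the general identity $p_c(T)=1/\mathrm{br}(T)$ for infinite trees (stated in Subsection~\ref{subsec: BpBGW}) together with the fact that the branching number of a BGW tree equals $m$ almost surely on non-extinction. You replace that machinery with tools already native to the survey: for $p<1/m$, an annealed first-moment bound $\E[Z_n]=(mp)^n$, promoted to a quenched statement by Fubini and a union over the countably many possible highest vertices of clusters; for $p>1/m$, the quenched recursion $1-\beta(\mathcal{T})=\prod_{i=1}^{N}\bigl(1-p\,\beta(\mathcal{T}^{(i)})\bigr)$, which shows that $\{\beta(\mathcal{T})=0\}$ occurs if and only if it occurs for every root subtree, whence $a=\P(\beta(\mathcal{T})=0)$ is a fixed point of $f$; annealed supercriticality ($mp>1$, via the thinning correspondence of Proposition~\ref{prop:percolation-dtree}) rules out the fixed point $1$, leaving $a=q$. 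That second half is essentially a self-contained re-derivation of the zero-one law for inherited properties (Proposition 5.9 of~\cite{lyons2017probability}), which is precisely the external input the paper invokes when it proves Theorem~\ref{thm:new}; so your argument is well adapted to the survey's toolkit (branching property plus the extinction theory of Subsection~\ref{subsection:extinctionBGW}) and makes explicit the annealed-to-quenched step that the citation hides. What the paper's route buys instead is generality: $p_c=1/\mathrm{br}$ applies to arbitrary infinite trees and the branching number governs much more than percolation, but it requires the flow/cutset machinery the survey deliberately leaves out. The only points worth polishing in your write-up are routine, not gaps: measurability of $\mathcal{T}\mapsto\beta(\mathcal{T})$, and a word that quenched monotonicity of $p\mapsto\theta_{\mathcal{T}}(p)$ comes from the standard coupling of percolation configurations.
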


In the framework of a BGW process with neutral mutations and infinite alleles, presented in Subsection~\ref{sec:GWmut}, consider the probabilities
\begin{equation} 
\label{eq:probInfiniteTree}
\begin{split}
    q_{\infty} &=  \mathbb{P} 
    \left(  \text{There exists an infinite subtree with the same type} \right), 
    \\
     q_{\emptyset} &=  \mathbb{P} 
	\left(  \mathcal{T}_{\emptyset} \text{ is infinite} \right) ,
\end{split}\end{equation}
where  $\mathcal{T}_{\emptyset}$ corresponds to the subtree of clones of the root $\emptyset$. 

Following the correspondence in Subsection~\ref{subsub:PercolationNeutralMutations}, we interpret Theorem~\ref{thm:lyons} as a phase transition for the probabilities in~\eqref{eq:probInfiniteTree}. 
Specifically,
\begin{equation} \label{eq:phasetransitionq1}
q_{\infty} = 
\begin{cases}
    0 & \text{if } 1 - r \leq p^{\mathcal{T}}_c \\
    1 & \text{if } 1 - r > p^{\mathcal{T}}_c .
\end{cases}
\end{equation}
and 
\begin{equation} \label{eq:phasetransitionq2}
q_{\emptyset}  
\begin{cases}
    = 0 & \text{if } 1 - r \leq p^{\mathcal{T}}_c \\
    > 0 & \text{if } 1 - r > p^{\mathcal{T}}_c .
\end{cases}
\end{equation}

A similar transition holds for the mother-dependent mutation model (MDM) from Subsection~\ref{subsub: MDM}, applying the relation of the MDM model with the restricted DaC model in Subsection~\ref{susub:RestrictedDaC-MDM}.
We can make an interpretation for the probabilities $q^\star_{\infty}$ and $q^\star_{\emptyset}$. We define $q^\star_{\infty}$ in the same way as $q_{\infty}$.  The difference for $q^\star_{\emptyset}$ is that instead of considering $\mathcal{T}_{\emptyset}$, we look at the
subtree
$\mathcal{T}^\star_{\emptyset}$  of individuals of the same type as the root $\emptyset$.
Then, we define
\begin{equation} 
\label{eq:probInfiniteTree-dependentMother}
    q^\star_{\emptyset} =  \mathbb{P} 
    \left( \text{The subtree  } \mathcal{T}^\star_{\emptyset} \text{ is infinite} \right) .
\end{equation}
A phase transition holds as in~\eqref{eq:phasetransitionq1} and~\eqref{eq:phasetransitionq2}.

\subsubsection{Divide and color percolation phase transition for BGW trees}

Finally, we consider DaC percolation on a BGW tree conditioned to be infinite. We will compare this model with the mother-independent mutation model (MIM) introduced in Subsection~\ref{subsub: MIM}. For this purpose, we extend Proposition~\ref{prop:Haggstrom} from deterministic trees to BGW trees.

\begin{theorem}  \label{thm:new}
    Let $\mathcal{T}$ be a BGW tree whose offspring distribution has mean $m > 1$, as in Theorem~\ref{thm:lyons}. Conditionally on non-extinction,
    we have
    \[
    \widetilde{p}_c^{\mathcal{T}} = \dfrac{1-ma_{\emptyset}}{m(1-a_{\emptyset})}
    \qquad \text{ a.s. given non-extinction. }
    \]
\end{theorem}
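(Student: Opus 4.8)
The plan is to reduce the divide-and-color question on the random tree $\mathcal{T}$ to an ordinary Bernoulli percolation question, and then to invoke Lyons' Theorem~\ref{thm:lyons}. Recall that $\widetilde{p}_c^{\mathcal{T}}$ is the infimum of those DaC retention parameters $p$ for which the monochromatic subtree of the root $\widetilde{\VEC{t}}_\emptyset$ is infinite with positive probability; this is a random variable depending on the realization of $\mathcal{T}$. The central ingredient is the equivalence established in the proposition following Proposition~\ref{prop:Haggstrom}: on a fixed infinite tree $\VEC{t}$, the event $\{\widetilde{\VEC{t}}_\emptyset \text{ is infinite}\}$ has positive probability under $\mathbb{P}_{p,d,a_\emptyset}^{\VEC{t}}$ if and only if the root cluster $\mathcal{C}_\emptyset$ is infinite with positive probability under Bernoulli percolation with effective parameter
\[
\widetilde{p} = 1 - (1-p)(1 - a_\emptyset).
\]

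\textbf{Reduction and inversion.} First I would condition on the realization of the BGW tree. The DaC model on $\mathcal{T}$ factorizes as first sampling $\mathcal{T}$ and then running the two-step coloring procedure of Subsection~\ref{sec:DaC} on that realization, so conditionally on $\{\mathcal{T} = \VEC{t}\}$ the law of the coloring is exactly $\mathbb{P}_{p,d,a_\emptyset}^{\VEC{t}}$. On the event of non-extinction, $\mathcal{T}$ is almost surely an infinite locally finite tree, so the deterministic equivalence above applies to almost every realization. Consequently, conditionally on the tree, $\widetilde{\VEC{t}}_\emptyset$ is infinite with positive probability exactly when $\widetilde{p} > p_c^{\mathcal{T}}$. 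Since $a_\emptyset < 1$, the map $p \mapsto \widetilde{p} = 1 - (1-p)(1-a_\emptyset)$ is a strictly increasing bijection of $[0,1]$, so inverting the condition $\widetilde{p} > p_c^{\mathcal{T}}$ yields
\[
\widetilde{p}_c^{\mathcal{T}} = \frac{p_c^{\mathcal{T}} - a_\emptyset}{1 - a_\emptyset}.
\]

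\textbf{Substitution via Lyons' theorem.} Since $\mathcal{T}$ has offspring mean $m > 1$, Theorem~\ref{thm:lyons} gives $p_c^{\mathcal{T}} = 1/m$ almost surely on non-extinction. Substituting this value into the previous display and simplifying gives
\[
\widetilde{p}_c^{\mathcal{T}} = \frac{1/m - a_\emptyset}{1 - a_\emptyset} = \frac{1 - m a_\emptyset}{m(1 - a_\emptyset)}
\]
almost surely given non-extinction, which is the claim. When $a_\emptyset > 1/m$ the right-hand side is negative, reflecting the degenerate regime in which the effective parameter already satisfies $\widetilde{p} = a_\emptyset > p_c^{\mathcal{T}}$ at $p = 0$, so that $\widetilde{\VEC{t}}_\emptyset$ is infinite with positive probability for every $p$.

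\textbf{Main obstacle.} The routine part is the algebraic inversion; the delicate point is the passage from the deterministic-tree statements---both the DaC/Bernoulli equivalence and Lyons' theorem---to the random tree. I would need to argue carefully that the conditioning on $\{\mathcal{T} = \VEC{t}\}$ is legitimate and that the almost-sure identity $p_c^{\mathcal{T}} = 1/m$ of Theorem~\ref{thm:lyons} can be combined with the equivalence on a common event of full probability within non-extinction, so that the resulting formula for $\widetilde{p}_c^{\mathcal{T}}$ holds simultaneously almost surely rather than merely after averaging over the tree.
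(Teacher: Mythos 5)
Your proof is correct, and its core coincides with the paper's: both arguments reduce the monochromatic cluster of the root under the DaC model to a Bernoulli cluster with effective retention parameter $\widetilde{p} = 1-(1-p)(1-a_{\emptyset})$, and then solve for the value of $p$ at which this effective parameter crosses the Bernoulli critical point, yielding $\frac{1-ma_{\emptyset}}{m(1-a_{\emptyset})}$. The difference is in the final ingredient. The paper stays annealed: it invokes Proposition 5.9 of \cite{lyons2017probability}, i.e.\ that Bernoulli percolation with parameter $\widetilde{p}$ on a BGW tree of mean $m$ produces a root cluster distributed as a BGW tree of mean $m\widetilde{p}$, hence almost surely finite when $m\widetilde{p}\leq 1$, and deduces the critical value from the equation $m\widetilde{p}=1$. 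You instead condition on the realization of $\mathcal{T}$, apply the deterministic equivalence of the unnumbered proposition following Proposition~\ref{prop:Haggstrom} to almost every infinite realization, and then substitute the quenched identity $p_c^{\mathcal{T}} = 1/m$ from Theorem~\ref{thm:lyons}. Your route is slightly longer but is in fact better matched to the statement being proved: the theorem asserts an almost-sure identity for the random variable $\widetilde{p}_c^{\mathcal{T}}$ given non-extinction, which is exactly what the quenched Lyons theorem delivers, whereas the paper's annealed thinning argument gives the criterion after averaging over trees and, strictly speaking, needs an additional step---precisely the conditioning argument you flag as the ``main obstacle''---to upgrade to the quenched almost-sure conclusion. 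Your closing remark on the regime $a_{\emptyset} > 1/m$, where the formula becomes negative because the effective parameter already exceeds $p_c^{\mathcal{T}}$ at $p=0$, is a sensible observation that neither the paper's statement nor its proof addresses.
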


\begin{proof}
As in the deterministic case, the connected component of a given color containing a given vertex is distributed as in Bernoulli percolation on a BGW tree with percolation parameter $\widetilde{p}=1-(1-p)(1-a_{\emptyset})$. As a consequence, Proposition 5.9 in~\cite[Section 5]{lyons2017probability} applies here as well,  implying that the cluster of the root (by color) will be finite a.s. if $m\widetilde{p}\leq 1$. From this, we deduce the  critical probability for this case. 
\end{proof}

Following the correspondence in Subsection~\ref{subsub:DaC-MIM}, we provide an interpretation of Theorem~\ref{thm:new} for the MIM model with mutation parameter $r \in [0,1]$. We obtain results similar to those in~\eqref{eq:phasetransitionq1} and~\eqref{eq:phasetransitionq2}. In this case, we define the probabilities
\begin{equation} 
	\label{eq:probM-IN}
	\begin{split}
		\widetilde{q}_{\infty} &=     \mathbb{P}^{\mathcal{T}}_{p, 2, 1/d} 
		\left(  \text{There exists an infinite subtree of the same type as the root} \right) ,
		\\
		\widetilde{q}_{\emptyset} &=     \mathbb{P}_{p, 2, 1/d}^\mathcal{T} 
		\left(  \widetilde{\mathcal{T}}_{\emptyset} \text{ is infinite} \right) ,
\end{split}\end{equation}
where $\widetilde{\mathcal{T}}_{\emptyset}$ corresponds to the
subtree of individuals who are of the same type as the root and have no ancestors of other types.

We then interpret Theorem~\ref{thm:new} as a phase transition for the probabilities in~\eqref{eq:probM-IN}. 
Specifically,
\begin{equation} \label{eq:phasetransitionq3}
	\widetilde{q}_{\infty} = 
	\begin{cases}
		0 & \text{if } 1 - r \leq \widetilde{p}^{\mathcal{T}}_c \\
		1 & \text{if } 1 - r > \widetilde{p}^{\mathcal{T}}_c ,
	\end{cases}
\end{equation}
and 
\begin{equation} \label{eq:phasetransitionq4}
	\widetilde{q}_{\emptyset}  
	\begin{cases}
		= 0 & \text{if } 1 - r \leq \widetilde{p}^{\mathcal{T}}_c \\
		> 0 & \text{if } 1 - r > \widetilde{p}^{\mathcal{T}}_c .
	\end{cases}
\end{equation}

\section*{Acknowledgements}
MCF's research is supported by the UNAM-PAPIIT grant IN109924. SHT would like to acknowledge the support of the UNAM-PAPIIT grant IA103724.

The authors would like to thank the research network \href{https://sites.google.com/view/aleatorias-normales/home}{Aleatorias \& Normales}, which connects female Latin American researchers in probability and statistics. The discussions facilitated by the \emph{Aleatorias \& Normales Seminar} served as the starting point for this project.

Finally, the authors would like to thank an anonymous referee for their detailed remarks and questions.

\bibliography{biblio}

\begin{thebibliography}{10}

\bibitem{BPZa}
A.~M.~Polyakov A.~A.~Belavin and A.~B. Zamolodchikov.
\newblock Infinite conformal symmetry in two-dimensional quantum field theory.
\newblock {\em Nuclear Phys. B.}, 241(2):333--380, 1984.

\bibitem{BPZb}
A.~M.~Polyakov A.~A.~Belavin and A.~B. Zamolodchikov.
\newblock Infinite conformal symmetry of critical fluctuations in two
  dimensions.
\newblock {\em J. Statist. Phys.}, 34(5-6):763--774, 1984.

\bibitem{AbrahamDelmas15}
R.~Abraham and J.-F. Delmas.
\newblock {An introduction to Galton-Watson trees and their local limits}.
\newblock Available at hal-01164661v2, 2015.

\bibitem{AizenmanDuminilCopinSidoravicius}
M.~Aizenman, H.~Duminil-Copin, and V.~Sidoravicius.
\newblock {Random currents and continuity of Ising model’s spontaneous
  magnetization}.
\newblock {\em Comm. Math. Phys.}, 334(2):719--742, 2015.

\bibitem{AthreyaNey}
K.~B. Athreya and P.~E. Ney.
\newblock {\em Branching Processes}.
\newblock Springer-Verlag, 1972.

\bibitem{BeffaraDuminilCopin}
V.~Beffara and H.~Duminil-Copin.
\newblock {The self-dual point of the two-dimensional random-cluster model is
  critical for $q \geq 1$}.
\newblock {\em Probab. Theory Related Fields}, 153(3-4):511--542, 2012.

\bibitem{Bertoin09}
J.~Bertoin.
\newblock The structure of the allelic partition of the total population for
  galton--watson processes with neutral mutations.
\newblock {\em Ann. Probab.}, 37(4):1502--1523, 2009.

\bibitem{Bertoin10}
J.~Bertoin.
\newblock A limit theorem for trees of alleles in branching processes with rare
  neutral mutations.
\newblock {\em Stoch. Process. Their Appl.}, 120(5):678--697, 2010.

\bibitem{BertoinUB}
J.~Bertoin and G.~Uribe Bravo.
\newblock {Supercritical percolation on large scale-free random trees}.
\newblock {\em Ann. Appl. Probab.}, 25(1):81--103, 2015.

\bibitem{Gabriel2020}
G.~Berzunza.
\newblock {The existence of a giant cluster for percolation on large
  Crump–Mode–Jagers trees}.
\newblock {\em Adv. Appl. Probab.}, 52(1):266–290, 2020.

\bibitem{Bienayme}
I.~J. Bienaym{\'e}.
\newblock De la loi de multiplication et de la duree des families.
\newblock {\em Soc. Philomat. Paris Extraits, S{\'e}r 5}, pages 37--39, 1845.

\bibitem{BlancasRivero}
A.~Blancas and V.~Rivero.
\newblock {On branching process with rare neutral mutation}.
\newblock {\em Bernoulli}, 24(2):1576 -- 1612, 2018.

\bibitem{bollobas2010}
B.~Bollob{\'a}s, C.~Borgs, J.~Chayes, and O.~Riordan.
\newblock Percolation on dense graph sequences.
\newblock {\em Ann. Probab.}, 38(1):150--183, 2010.

\bibitem{BroadbentHammersley}
S.~Broadbent and J.~Hammersley.
\newblock {Percolation processes: I. Crystals and mazes}.
\newblock {\em Proceedings of the Cambridge Philosophical Society}, 53.

\bibitem{DuminilCopinGagnebinHarelManolescuTassion}
H.~Duminil-Copin, M.~Gagnebin, M.~Harel, I.~Manolescu, and V.~Tassion.
\newblock {Discontinuity of the phase transition for the planar random-cluster
  and Potts models with $q > 4$}.
\newblock {\em Ann. Sci. Éc. Norm. Supér}, 54(6):1363--1413, 2021.

\bibitem{duminil2020existence}
H.~Duminil-Copin, S.~Goswami, A.~Raoufi, F.~Severo, and A.~Yadin.
\newblock {Existence of phase transition for percolation using the Gaussian
  free field}.
\newblock {\em Duke Math. J.}, 169(18):3539--3563, 2020.

\bibitem{DuminilCopinKozlowskiKrachunManolescuOulamara}
H.~Duminil-Copin, K.~K. Kozlowski, D.~Krachun, and I.~Manolescu.
\newblock Rotational invariance in critical planar lattice models.
\newblock Preprint, available at arXiv:2012.11672, 2020.

\bibitem{DuminilCopinRaoufiTassion}
H.~Duminil-Copin, A.~Raoufi, and V.~Tassion.
\newblock {Sharp phase transition for the random-cluster and Potts models via
  decision trees}.
\newblock {\em Ann. Math.}, 189(1):75--99, 2019.

\bibitem{DuminilSidoraviciusTassion}
H.~Duminil-Copin, V.~Sidoravicius, and V.~Tassion.
\newblock {Continuity of the phase transition for planar random-cluster and
  Potts models with $1 \leq q \leq 4$}.
\newblock {\em Comm. Math. Phys.}, 349(1):47--107, 2017.

\bibitem{DuminilCopinTassion2016new}
H.~Duminil-Copin and V.~Tassion.
\newblock {A new proof of the sharpness of the phase transition for Bernoulli
  percolation and the Ising model}.
\newblock {\em Comm. Math. Phys.}, 343:725--745, 2016.

\bibitem{DuquesneLeGall}
T.~Duquesne and J.-F.~Le Gall.
\newblock {\em Random trees, L{\'e}vy processes and spatial branching
  processes}, volume 281.
\newblock Soci{\'e}t{\'e} math{\'e}matique de France Paris, France, 2002.

\bibitem{easo2023}
P.~Easo.
\newblock Existence of a percolation threshold on finite transitive graphs.
\newblock {\em Int. Math. Res. Not.}, 2023(21):18781--18802, 2023.

\bibitem{EasoHutchcroft23}
P.~Easo and T.~Hutchcroft.
\newblock {The critical percolation probability is local}.
\newblock Preprint, available at arXiv:2310.10983, 2023.

\bibitem{easo2024supercritical}
P.~Easo and T.~Hutchcroft.
\newblock Supercritical percolation on finite transitive graphs i: Uniqueness
  of the giant component.
\newblock {\em Duke Math. J.}, 173(13):2563--2618, 2024.

\bibitem{FortuinII}
C.~Fortuin.
\newblock {On the random-cluster model: II. The percolation model}.
\newblock {\em Phyisica}, 58, 1972.

\bibitem{FortuinIII}
C.~Fortuin.
\newblock {On the random-cluster model: III. The simple random-cluster model}.
\newblock {\em Phyisica}, 59, 1972.

\bibitem{FortuinKasteleyn}
C.~Fortuin and P.~Kasteleyn.
\newblock {On the random-cluster model: I. Introduction and relation to other
  models}.
\newblock {\em Phyisica}, 57, 1972.

\bibitem{Furstenberg}
H.~Furstenberg.
\newblock {Intersections of Cantor sets and transversality of semigroups}.
\newblock In {\em Problems in Analysis. Symp in Honor of Salomon Bochner,
  Princeton Univ.}, pages 41--59. Princeton University Press, 1971.

\bibitem{GrimmettPercolation}
G.~Grimmett.
\newblock {\em Percolation}.
\newblock Springer, 1999.

\bibitem{Grimmett}
G.~Grimmett.
\newblock {\em The Random-Cluster Model}.
\newblock Springer, 2006.

\bibitem{Haggstrom}
O.~Haggström.
\newblock {Coloring percolation clusters at random}.
\newblock {\em Stoch. Process. Their Appl.}, 96:213--–242, 2001.

\bibitem{HugoFields}
M.~Hairer.
\newblock {The work of Hugo Duminil-Copin}.
\newblock In {\em Proceedings of the International Congress of Mathematicians
  2022 (ICM 2022) Vol. I: Plenary Lectures and Ceremonies}, pages 1--23, 2022.

\bibitem{JacobsenSalasSokal}
J.~L. Jacobsen, J.~Salas, and A.~D. Sokal.
\newblock {Spanning forests and the $q$-state Potts model in the limit $q
  \rightarrow 0$}.
\newblock {\em J. Stat. Phys}, 119:1153--1281, 2005.

\bibitem{JukesCantor1969}
T.H. Jukes and C.R. Cantor.
\newblock Evolution of protein molecules.
\newblock {\em Mammalian Protein Metabolism}, 3(24):21--132, 1969.

\bibitem{Kendall75}
D.~G. Kendall.
\newblock The genealogy of genealogy branching processes before (and after)
  1873.
\newblock {\em B. Lond. Math. Soc.}, 7(3):225--253, 1975.

\bibitem{SmirnovFields}
H.~Kesten.
\newblock {The work of Stanislav Smirnov}.
\newblock In {\em Proceedings of the International Congress of Mathematicians
  2010 (ICM 2010) Vol. I: Plenary Lectures and Ceremonies}, pages 72--84, 2010.

\bibitem{Kimura1971}
M.~Kimura.
\newblock Theoretical foundation of population genetics at the molecular level.
\newblock {\em Theor. Popul. Biol}, 2(2):174--208, 1971.

\bibitem{Kimura1980}
M.~Kimura.
\newblock A simple method for estimating evolutionary rates of base
  substitutions through comparative studies of nucleotide sequences.
\newblock {\em J. Mol. Evol.}, 16(2):111--120, 1980.

\bibitem{Kimura1981}
M.~Kimura.
\newblock Estimation of evolutionary distances between homologous nucleotide
  sequences.
\newblock {\em Proc. Natl. Acad. Sci. U.S.A.}, 78(1):454--458, 1981.

\bibitem{PMID:29030470}
J.~Kuipers, K.~Jahn, B.~J. Raphael, and N.~Beerenwinkel.
\newblock Single-cell sequencing data reveal widespread recurrence and loss of
  mutational hits in the life histories of tumors.
\newblock {\em Genome Res.}, 27(11):1885--1894, 2017.

\bibitem{AmauryNotas2008}
A.~Lambert.
\newblock {Population dynamics and random genealogies}.
\newblock {\em Stoch. Models}, 24(sup1):45--163, 2008.

\bibitem{LSW2001a}
G.~F. Lawler, O.~Schramm, and W.~Werner.
\newblock {Values of Brownian intersection exponents, I: Half-plane exponents}.
\newblock {\em Acta Math.}, 187:237--273, 2001.

\bibitem{LSW2001b}
G.~F. Lawler, O.~Schramm, and W.~Werner.
\newblock {Values of Brownian intersection exponents, II: Plane exponents}.
\newblock {\em Acta Math.}, 187:275--308, 2001.

\bibitem{LSW2002c}
G.~F. Lawler, O.~Schramm, and W.~Werner.
\newblock {Analyticity of intersection exponents for planar Brownian motion}.
\newblock {\em Acta Math.}, 189(2):179--201, 2002.

\bibitem{LSW2002b}
G.~F. Lawler, O.~Schramm, and W.~Werner.
\newblock {One-arm exponent for critical 2D percolation}.
\newblock {\em Electron. J. Probab.}, 7:1--13, 2002.

\bibitem{LSW2002a}
G.~F. Lawler, O.~Schramm, and W.~Werner.
\newblock {Values of Brownian intersection exponents III: Two-sided exponents}.
\newblock {\em Ann. Inst. Henri Poincar\'e B}, 38(1):109--123, 2002.

\bibitem{LSW2004}
G.~F. Lawler, O.~Schramm, and W.~Werner.
\newblock Conformal invariance of planar loop-erased random walks and uniform
  spanning trees.
\newblock {\em Ann. Probab.}, 32(1B):939--995, 2004.

\bibitem{Lyons90}
R.~Lyons.
\newblock Random walks and percolation on trees.
\newblock {\em Ann. Probab.}, 18(3):931--958, 1990.

\bibitem{lyons2017probability}
R.~Lyons and Y.~Peres.
\newblock {\em Probability on trees and networks}, volume~42.
\newblock Cambridge University Press, 2017.

\bibitem{MSRM13}
L.~A. Mathew, P.~R. Staab, L.~E. Rose, and D.~Metzler.
\newblock {Why to account for finite sites in population genetic studies and
  how to do this with Jaatha 2.0.}
\newblock {\em Ecol. Evol.}, 3(11):3647--3662, 2013.

\bibitem{WernerFields}
C.~M. Newman.
\newblock {The work of Wendelin Werner}.
\newblock In {\em International Congress of Mathematicians 2006 (ICM 2006).
  Vol. I: Plenary Lectures and Ceremonies}, pages 88--95, 2007.

\bibitem{Schramm00}
O.~Schramm.
\newblock Scaling limits of loop-erased random walks and uniform spanning
  trees.
\newblock {\em Isr. J. Math.}, 118:221--288, 2000.

\bibitem{Smirnov}
S.~Smirnov.
\newblock {Critical percolation in the plane: conformal invariance, Cardy's
  formula, scaling limits}.
\newblock {\em C. R. Acad. Sci.}, 333:239--244, 2001.

\bibitem{SmirnovWerner}
S.~Smirnov and Wendelin W.
\newblock Critical exponents for two-dimensional percolation.
\newblock {\em C. R. Acad. Sci.}, 333:239--244, 2001.

\bibitem{GaltonWatson}
H.~W. Watson and F.~Galton.
\newblock On the probability of the extinction of families.
\newblock {\em J. R. Anthropol. Inst.}, 4:138--144, 1875.

\bibitem{Wilson}
K.~G. Wilson.
\newblock The renormalization group and critical phenomena.
\newblock {\em Rev. Mod. Phys.}, 55(5-6):763--774, 1983.

\bibitem{SiFit17}
H.~Zafar, A.~Tzen, N.~Navin, K.~Chen, and L.~Nakhleh.
\newblock {SiFit: inferring tumor trees from single-cell sequencing data under
  finite-sites models}.
\newblock {\em Genome Biol.}, 18:1--20, 2017.

\end{thebibliography}
\bibliographystyle{plain}

\end{document}